\newif\iflong
\title{On directed homotopy equivalences and a notion of 
directed topological complexity}
\author{Eric Goubault\footnote{LIX, Ecole Polytechnique, CNRS, Universit\'e Paris-Saclay, 91128 Palaiseau, France goubault@lix.polytechnique.fr. Work presented during
the Hausdorff
Institute ``Applied Computational Algebraic Topology'' semester, on the 14th
September 2017 for which the author gratefully acknowledges the organizers.}}
\date{\today}
\def\Z{{\Bbb Z}}
\def\R{{\Bbb R}}
\newcommand\inn[1]{\mathfrak{I}(#1)}
\def\one{\mbox{$1 \!\! 1$}}
\newcommand\PP[0]{\mathcal{P}}
\newcommand\pth[1]{P#1}
\newcommand\diTC[1]{\overrightarrow{TC}(#1)}
\newcommand\map[3]{#1 : #2 \rightarrow #3}
\newtheorem{lemma}{Lemma}
\newtheorem{theorem}{Theorem}
\newtheorem{proposition}{Proposition}
\newtheorem{example}{Example}
\newtheorem{definition}{Definition}
\newcommand\comment[1]{}
\newcommand\Ab{\mathbf{Ab}}
\newcommand\ForAuthors[1]
\begin{document}

\maketitle

\begin{abstract}
This short note introduces a notion of directed homotopy equivalence (or
dihomotopy equivalence)
and of 
``directed'' topological complexity (which elaborates on the notion that can be 
found in e.g. \cite{farber}) which have  
a number of desirable joint properties. 
In particular, being dihomotopically equivalent 
implies having bisimilar natural homologies (defined
in 
\cite{naturalhomology}). Also, under mild conditions, directed topological 
complexity is an invariant of our directed homotopy equivalence and having 
a directed topological complexity equal to one is (under these conditions) equivalent
to being dihomotopy equivalent to a point (i.e., to being ``dicontractible'', as in the
undirected case). 
It still remains to compare this notion with the notion introduced in 
\cite{CSL16}, which has lots of good properties as well. 
For now, it seems that for reasonable spaces, this new proposal of
directed homotopy equivalence identifies more spaces than the one
of \cite{CSL16}. 
\end{abstract}

\tableofcontents

\newpage 

\section{Introduction}

The aim of this note is to introduce another notion of directed homotopy 
equivalence than the one of \cite{CSL16}, hoping to get other insights on 
directed topological spaces. The view we are taking here is that of topological
complexity, as defined in \cite{farber}, adapted to directed topological spaces. 

Let us briefly motivate the interest of this ``directed'' topological complexity notion. 
In the very nice work of M. Farber, it is observed that the very important
planification problem in robotics boils down to, mathematically speaking, 
finding a section to the path space fibration $\chi \ : \ PX=X^I \rightarrow X \times X$ with
$\chi(p)=(p(0),p(1))$. If this section is continuous, then the complexity is the lowest
possible 
(equal to one), otherwise, the minimal number of discontinuities that would encode
such a section would be what is called the topological complexity of $X$. This
topological complexity is both understandable algorithmically, and topologically,
e.g. as $s$ having a continuous section is equivalent to $X$ being contractible. 
More generally speaking, the topological complexity is defined as the Schwartz
genus of the path space fibration, i.e. is the minimal cardinal of partitions of 
$X \times X$ into ``nice'' subspaces $F_i$ such that $s_{F_i} \ : \ F_i \rightarrow
PX$ is continuous. 

This definition perfectly fits the planification problem in robotics where there are 
no constraints on the actual control that can be applied to the physical apparatus
that is supposed to be moved from point $a$ to point $b$. In many applications, 
a physical apparatus may have dynamics that can be described as an ordinary
differential equation in the state variables $x \in \R^n$ and in time $t$, 
parameterized by control parameters $u \in \R^p$, $\dot{x}(t)=f(t,x(t))$. These parameters are generally
bounded within some set $U$, and, not knowing the precise control law (i.e. 
parameters $u$ as a function of time $t$) to be applied, the way the controlled system
can evolve is as one of the solutions of the differential inclusion 
$\dot{x}(t) \in F(t,x(t))$ where $F(t,x(t))$ is the set of all $f(t,x(t),u)$ with
$u \in U$. Under some classical conditions, this differential inclusion can be
proven to have solutions on at least a small interval of time, but we will not discuss
this further here. Under the same conditions, the set of solutions of this differential
inclusion naturally generates a dspace (a very general structure of directed
space, where a preferred subset of paths is singled out, called
directed paths, see e.g. \cite{grandisbook}). 
Now, the planification problem in the presence of control constraints equates to finding
sections to the analogues to the path space fibration\footnote{That would most probably
not qualify for being called a fibration in the directed setting.} taking
a dipath to its end points. This is developed in next section, and we introduce
a notion of directed homotopy equivalence that has precisely, and in a certain
non technical sense, minimally, the right properties with respect to this directed
version of topological complexity. 

The development of the notion of directed topological complexity and of its properties,
together with applications to optimal control is joint work with coauthors, and
will be published separately.

\paragraph{Mathematical context : }

The context is that of d-spaces \cite{grandisbook}. 

\begin{definition}[\cite{grandisbook}]
A directed topological space, or d-space $X=(X,dX)$ is a topological space equipped with
a set $dX$ of continuous maps $p:I \rightarrow X$ (where $I=[0,1]$ is the unit
segment with the usual topology inherited from $\R$), called directed paths or
d-paths, satisfying three axioms~: 
\begin{itemize}
\item every constant map $I\rightarrow X$ is directed
\item $dX$ is closed under composition with non-decreasing maps from $I$ to $I$
\item $dX$ is closed under concatenation
\end{itemize}
\end{definition}

For $X$ a d-space, let us note by $PX$ (resp. $TX$) the topological space, with compact
open topology, of dipaths (resp. the
trace space, ie. $PX$ modulo increasing homeomorphisms of the unit directed interval) in $X$. 
$PX(a,b)$ (resp. $TX(a,b)$) is the sub-space of $PX$ (resp. of $TX$)
containing only dipaths (resp. traces) from point $a \in X$
to point $b \in X$. We write * for the concatenation map from $PX(a,b)\times PX(b,c)$
to $PX(a,c)$ (resp. on trace spaces), which is continuous. 

A dmap $f$ from d-space $X$ to d-space $Y$ is a continuous map from $X$ to $Y$ that also maps elements from $dX$ to elements
of $dY$ (i.e. they preserve directed paths). 

In what follows, we will be particularly concerned with the following map~: 

\begin{definition}
We define the {\em dipath space map}\footnote{By analogy with the classical path space
fibration - but fibration may be a bad term in that case in directed algebraic topology.}
$\chi : PX \rightarrow X \times X$ of $X$ by  
$\chi(p)=(p(0),p(1))$ for $p \in PX$.
\end{definition}

Because $PX$ only contains directed paths, the image of $\chi$ is just a subset of $X \times X$, 
called $\Gamma_X=\{ (x,y) \ | \ \exists p \in PX, \ p(0)=x, \ p(1)=y \ \}$. 
On the classical case, we do not need to force the restriction to the image
of the path space fibration, since the notions of contractibility and path-connectedness
are simple enough to be defined separately. In the directed setting, dicontractibility,
and ``directed connectedness'' are not simple notions and will be defined 
here through the study of the dipath space map. 

In order to study this map, in particular when looking at conditions under which there exists ``nice'' sections to it, we need a few concepts
from directed topology. 



\section{Some useful directed topological constructs}

Let $X$ be a d-space. 
\comment{We can associate with $X$ the natural system $\tilde{X} : FT(X) \rightarrow Top_*$
($Top_*$ being the category of pointed topological spaces) 
with~: 
\begin{itemize}
\item $\tilde{X}(u)=(TX(u_0,u_1),u)$, where $u_0$ is the initial point of $u$ (i.e. $u(0)$), 
and $u_1$ its final point (i.e. $u(1)$)
\item $\tilde{X}(\alpha,\beta)(a)=\alpha * a * \beta$, where $(\alpha,\beta)$ is a morphism
from $u$ to $v=\alpha * u * \beta$ and $a$ is a trace from $u_0$ to $u_1$ (i.e. an element
of $TX$). 
\end{itemize}
}
We define as in \cite{Eilenberg}, $\preceq$ the preorder on $X$, $x \preceq y$ iff there exists a dipath from $x$ to $y$. We define the category $\PP{X}$ whose:
	\begin{itemize}
		\item objects are pairs of points $(x,y)$ of $X$ such that $x \preceq y$ (i.e. objects
are elements of $\Gamma_X$)
		\item morphisms (called extensions) from $(x,y)$ to $(x',y')$ are pairs $(\alpha, \beta)$ of dipaths of $X$ with $\alpha$ going from $x'$ to $x$ and $\beta$ going from $y$ to $y'$
	\end{itemize}

We now define, for each $X$ d-space, the functor $\pth{X}$ from $\PP{X}$ to $Top$ with~:
\begin{itemize}
\item $\pth{X}(x,y)=PX(x,y)$
\item $\pth{X}(\alpha,\beta)(a)=\alpha * a * \beta$, where $(\alpha,\beta)$ is a morphism
from $(x,y)$ to $(x',y')$
and $a$ is a trace from $x$ to $y$ (i.e. and element
of $TX(x,y)$). 
\end{itemize}

\paragraph{Remark : } There is an obvious link to profunctors and to enriched
category theory that we will not be contemplating here. Similar ideas from
enriched category theory 
in directed algebraic topology have already been used in \cite{porter08,porter15} and \cite{CSL16}. 

\vskip .2cm

A homotopy 
is a continuous function $H : I\times X \rightarrow Y$. 
We say that two maps $f,g : X \rightarrow Y$ are {homotopic} if there is a homotopy $H$ such that 
$H(0,\_) = f$ and $H(1,\_) = g$. This is an equivalence relation, compatible with composition. 

A {d-homotopy equivalence} is a \textit{dmap} $\map{f}{X}{Y}$ which is invertible up to 
homotopy, i.e., such that there is a {dmap} $\map{g}{Y}{X}$ with $f\circ g$ and $g\circ f$ homotopic to identities. We say that two {d}spaces are {d-homotopy equivalent} if there is a {d-}homotopy equivalence between them.

In such a case, $f$ and $g$ being dmaps induce 
$$Pf \ : \ PX \rightarrow PY$$ resp. 
$$Pg \ : \
PY \rightarrow PX$$ 
which are
continuously bigraded maps in the sense that $Pf_{a,b} \ : \ PX(a,b) \rightarrow PY(f(a),f(b))$
and this bigrading is continuous in $a$, $b$
(resp. $Pg_{c,d} \ : \ PY(c,d) \rightarrow PX(g(c),g(d))$, continuous in $c$, $d$).

We write $PY^f$ for the sub topological space of $PY$ of dipaths from $f(a)$ to $f(b)$ in $Y$ 
for some $(a,b) \in \Gamma_X$ (resp. $PX^g$ for the sub topological space of $PX$
of dipaths in $X$ from $g(c)$ to $g(d)$ for some $(c,d) \in \Gamma_Y$).



\section{Dihomotopy equivalences}

\subsection{Dihomotopy equivalence  and dicontractibility}

\begin{definition}
\label{def:dihomotopyequiv}
Let $X$ and $Y$ be two d-spaces. A dihomotopy equivalence between $X$ and $Y$ is 
given by~: 
\begin{itemize}
\item A d-homotopy equivalence between $X$ and $Y$, $f: X \rightarrow Y$ and
$g : Y \rightarrow X$. 
\item A map $F \ : \ PY^f \rightarrow PX$ 
continuously bigraded as 
$F_{a,b} \ : \ PY(f(a),f(b)) \rightarrow PX(a,b)$
such that $(Pf_{a,b},F_{a,b})$ is a homotopy equivalence between $PX(a,b)$ and
$PY(f(a),f(b))$ 
\item 
A map $G \ : \ PX^g \rightarrow PY$, 
continuously bigraded as 
$G_{c,d} \ : \ PX(g(c),g(d)) \rightarrow PY(c,d)$
such that $(Pg_{c,d},G_{c,d})$ is a homotopy equivalence between
$PY(c,d)$ and $PX(g(c),g(d))$
\item These homotopy equivalences are natural in the following sense 
\begin{itemize}
\item For the two diagrams below (separately), for all $(\alpha,\beta) \in {\cal P}X$, there exists $(\gamma,\delta) \in 
  {\cal P}Y$ (with domains and codomains induced by the diagrams below) 
such that they commute\footnote{Meaning both squares, one
with $Pf_{a,b}$ and $Pf_{a',b'}$ and the other with $F_{a,b}$ and $F_{a',b'}$ ; and
similarly for the diagram on the right hand side.} up 
to homotopy
\begin{center}
\begin{minipage}{.45\linewidth}
  \begin{tikzpicture}[scale=1]
\matrix (m) [matrix of math nodes,row sep=3em,column sep=4em,minimum width=2em]
  {
     \scriptstyle PX(a,b) & \scriptstyle PY(f(a),f(b)) \\
     \scriptstyle PX(a',b') & \scriptstyle PY(f(a'),f(b')) \\};
  \path[-stealth]
    (m-1-1) edge node [left] {$\scriptstyle PX(\alpha,\beta)$} (m-2-1)
            edge [out=10,in=170] node [above] {${\scriptstyle Pf_{a,b}}$} (m-1-2)
(m-1-2) edge [out=-170,in=-10] node [below] {$\scriptstyle F_{a,b}$} (m-1-1)
(m-2-1) edge [out=10,in=170] node [above] {${\scriptstyle Pf_{a',b'}}$} (m-2-2)
(m-2-2) edge [out=-170,in=-10] node [below] {$\scriptstyle F_{a',b'}$} (m-2-1)
    (m-1-2) edge node [dashed,right] {$\scriptstyle PY(\gamma,\delta)$} (m-2-2);
\end{tikzpicture}
\end{minipage}
\begin{minipage}{.45\linewidth}
  \hfill
  \begin{tikzpicture}[scale=1]
\matrix (m) [matrix of math nodes,row sep=3em,column sep=4em,minimum width=2em]
  {
     \scriptstyle PX(g(c),g(d)) & \scriptstyle PY(c,d) \\
     \scriptstyle PX^g(u',v') & \scriptstyle PY(c',d') \\};
  \path[-stealth]
    (m-1-1) edge node [left] {$\scriptstyle PX(\alpha,\beta)$} (m-2-1)
            edge [out=10,in=170] node [above] {${\scriptstyle G_{c,d}}$} (m-1-2)
(m-1-2) edge [out=-170,in=-10] node [below] {$\scriptstyle Pg_{c,d}$} (m-1-1)
(m-2-1) edge [out=10,in=170] node [above] {${\scriptstyle G_{c',d'}}$} (m-2-2)
(m-2-2) edge [out=-170,in=-10] node [below] {$\scriptstyle Pg_{c',d'}$} (m-2-1)
    (m-1-2) edge node [dashed,right] {$\scriptstyle PY(\gamma,\delta)$} (m-2-2);
\end{tikzpicture}
\end{minipage}
\end{center}
with $g(c')=u'$ and $g(d')=v'$.
\item For the two diagrams below (separately), for all $(\gamma,\delta) \in {\cal P}Y$ there exists
$(\alpha,\beta) \in {\cal P}X$ 
such that they commute up 
to homotopy 
\begin{center}
\begin{minipage}{.45\linewidth}
  \begin{tikzpicture}[scale=1]
\matrix (m) [matrix of math nodes,row sep=3em,column sep=4em,minimum width=2em]
  {
     \scriptstyle PX(a,b) & \scriptstyle PY(f(a),f(b)) \\
     \scriptstyle PX(a',b') & \scriptstyle PY^f(u',v') \\};
  \path[-stealth]
    (m-1-1) edge node [left] {$\scriptstyle PX(\alpha,\beta)$} (m-2-1)
            edge [out=10,in=170] node [above] {${\scriptstyle Pf_{a,b}}$} (m-1-2)
(m-1-2) edge [out=-170,in=-10] node [below] {$\scriptstyle F_{a,b}$} (m-1-1)
(m-2-1) edge [out=10,in=170] node [above] {${\scriptstyle Pf_{a',b'}}$} (m-2-2)
(m-2-2) edge [out=-170,in=-10] node [below] {$\scriptstyle F_{a',b'}$} (m-2-1)
    (m-1-2) edge node [dashed,right] {$\scriptstyle PX(\gamma,\delta)$} (m-2-2);
\end{tikzpicture}
\end{minipage}
\begin{minipage}{.45\linewidth}
  \hfill
  \begin{tikzpicture}[scale=1]
\matrix (m) [matrix of math nodes,row sep=3em,column sep=4em,minimum width=2em]
  {
     \scriptstyle PX(g(c),g(d)) & \scriptstyle PY(c,d) \\
     \scriptstyle PX(g(c'),g(d')) & \scriptstyle PY(c',d') \\};
  \path[-stealth]
    (m-1-1) edge node [left] {$\scriptstyle PX(\alpha,\beta)$} (m-2-1)
            edge [out=10,in=170] node [above] {${\scriptstyle G_{c,d}}$} (m-1-2)
(m-1-2) edge [out=-170,in=-10] node [below] {$\scriptstyle Pg_{c,d}$} (m-1-1)
(m-2-1) edge [out=10,in=170] node [above] {${\scriptstyle G_{c',d'}}$} (m-2-2)
(m-2-2) edge [out=-170,in=-10] node [below] {$\scriptstyle Pg_{c',d'}$} (m-2-1)
    (m-1-2) edge node [dashed,right] {$\scriptstyle PX(\gamma,\delta)$} (m-2-2);
\end{tikzpicture}
\end{minipage}
\end{center}
with $f(a')=u'$ and $f(b')=v'$.
\end{itemize}
\end{itemize}
\end{definition}

We sometimes write $(f,g,F,G)$ for the full data associated to the dihomotopy
equivalence $f \ : \ X \rightarrow Y$. 
Note that in the definition above, we always have the following diagrams that commute on the nose, so that the conditions above only consists of 6 commutative diagrams
up to homotopy~:   

\begin{center}
\begin{minipage}{.3\linewidth}
  \hfill
  \begin{tikzpicture}[scale=1]
    \node (Fx') at (1,0) {$\scriptstyle \pth{X}(a',b')$};
    \node (Fx) at (1,1.5) {$\scriptstyle \pth{X}(a,b)$};
    \node (Gy') at (3,0) {$\scriptstyle \pth{Y}(f(a'),f(b'))$};
    \node (Gy) at (3,1.5) {$\scriptstyle \pth{Y}(f(a),f(b))$};
    \draw[->] (Fx) -- (Fx');
    \draw[->] (Fx) -- node [above] {$\scriptscriptstyle Pf_{a,b}$} (Gy);
    \draw[->] (Fx') -- node [below] {$\scriptscriptstyle Pf_{a',b'}$} (Gy');
    \draw[->] (Gy) -- (Gy');
    \node (Fi) at (0.7,0.75) {$\scriptscriptstyle \pth{X}(\alpha,\beta)$};
    \node (Gj) at (3.3,0.75) {$\scriptscriptstyle \pth{Y}(f(\alpha),f(\beta))$};
  \end{tikzpicture}
\end{minipage}
\begin{minipage}{.3\linewidth}
  \hfill
\begin{tikzpicture}[scale=1]
    \node (Fx') at (1,0) {$\scriptstyle \pth{X}(g(c'),g(d'))$};
    \node (Fx) at (1,1.5) {$\scriptstyle \pth{X}(g(c),g(d))$};
    \node (Gy') at (3,0) {$\scriptstyle \pth{Y}(c',d')$};
    \node (Gy) at (3,1.5) {$\scriptstyle \pth{Y}(c,d)$};
    \draw[->] (Fx) -- (Fx');
    \draw[<-] (Fx) -- node [above] {$\scriptscriptstyle Pg_{c,d}$} (Gy);
    \draw[<-] (Fx') -- node [below] {$\scriptscriptstyle Pg_{c',d'}'$} (Gy');
    \draw[->] (Gy) -- (Gy');
    \node (Fi) at (0.7,0.75) {$\scriptscriptstyle PX(g(\gamma),g(\delta))$};
    \node (Gj) at (3.3,0.75) {$\scriptscriptstyle PY(\gamma,\delta)$};
  \end{tikzpicture}
\end{minipage}
\end{center}



\paragraph{Remark : }
This definition clearly bears a lot of similarities with Dwyer-Kan weak equivalences
in simplicial categories (see e.g. \cite{bergner04}). The main ingredient
of Dwyer-Kan weak equivalences being exactly that $Pf$ induces a homotopy 
equivalence. But our definition adds continuity and 
``extension'' or ``bisimulation-like'' 
conditions to it, which are instrumental to our theorems and to the classification
of the underlying directed geometry.

\vskip .2cm

\comment{
\paragraph{Remark : }
Similarly, we can define weak dihomotopy equivalences to be dmaps
$f \ : \ X \rightarrow Y$ such that $f$ is a weak-equivalence (i.e. induces
an isomorphism between all homotopy groups of $X$ and $Y$) between $X$ and $Y$ and
induce isomorphisms $Pf_{a,b}$ between all homotopy groups
$\pi_k(PX(a,b))$ and $\pi_k(PY(f(a),f(b)))$, such that $Pf^{-1}_{a,b}$ is
a continuous family, in $a$ and $b$ of maps, and 
with the extension properties as
in Definition \ref{def:dihomotopyequiv}, i.e., for all $(\alpha,\beta) \in {\cal 
P}X$ there exists $(\gamma,\delta) \in {\cal P}Y$ (and conversely, for
all $(\gamma,\delta) \in {\cal P}Y$ there exists $(\alpha,\beta) \in {\cal P}X$)
such that the following diagrams commute

\ForAuthors{Not obvious, we do not have a $g$ which would be homotopy inverse - 
so we need to express what kind of bisimulation relation (x,f(x)) and
(g(y),y) do generate, in the abstract, to impose this on a natural system
of homotopy isomorphisms}

\begin{center}
\begin{minipage}{.45\linewidth}
  \begin{tikzpicture}[scale=1]
\matrix (m) [matrix of math nodes,row sep=3em,column sep=4em,minimum width=2em]
  {
     \scriptstyle \pi_k(PX(a,b)) & \scriptstyle \pi_k(PY(f(a),f(b))) \\
     \scriptstyle \pi_k(PX(a',b')) & \scriptstyle \pi_k(PY(f(a'),f(b'))) \\};
  \path[-stealth]
    (m-1-1) edge node [left] {$\scriptstyle PX(\alpha,\beta)$} (m-2-1)
            edge [out=10,in=170] node [above] {${\scriptstyle Pf_{a,b}}$} (m-1-2)
(m-1-2) edge [out=-170,in=-10] node [below] {$\scriptstyle Pf^{-1}_{a,b}$} (m-1-1)
(m-2-1) edge [out=10,in=170] node [above] {${\scriptstyle Pf_{a',b'}}$} (m-2-2)
(m-2-2) edge [out=-170,in=-10] node [below] {$\scriptstyle Pf^{-1}_{a',b'}$} (m-2-1)
    (m-1-2) edge node [dashed,right] {$\scriptstyle PX(\gamma,\delta)$} (m-2-2);
\end{tikzpicture}
\end{minipage}
\end{center}

(where, by an abuse of notation all maps, $PX(\alpha,\beta)$ etc., denote
also the induces map in homotopy).
This amounts to (for all $(\alpha,\beta)$ there exists $(\gamma,\delta)$ and
conversely for all $(\gamma,\delta)$ there exists $(\alpha,\beta)$ and for
all $p \in \pi_k(PX(a,b))$ and all $q \in \pi_k(PY(f(a),f(b)))$), 
$Pf_{a',b'}([\alpha * p * \beta])=[\gamma * Pf_{a,b}(p) * \delta]$ \ForAuthors{For free?}
and $Pf^{-1}_{a',b'}([\gamma * q * \delta])=[\alpha * Pf^{-1}_{a,b} * \beta]$ where
$[ . ]$ denotes the homotopy class within $\pi_k$. 

\ForAuthors{Surjective $f$ have the 2-out-of-6 property. Directed homology as derived functor.}

\vskip .2cm
}


\paragraph{Remark : } 
There is an obvious notion of deformation diretract of $X \subseteq Y$, 
which is a dihomotopy
equivalence  
$r \ : \ Y \rightarrow X$ such that the inclusion map from $X$ to $Y$ is the 
left homotopy inverse of $r$ and $r \circ i=Id$. 
But there seems to be no reason that in general, two dihomotopically equivalent spaces 
are diretracts of a third one (the mapping cylinder in the classical
case). It may be true with a zigzag of diretracts though, as in \cite{CSL16}. 

\vskip .2cm

A 
dicontractible directed space is a space for which there exists a directed deformation retract
onto one of its points. Particularizing once again the definition above, we get~: 

\begin{definition}
\label{def:dicontractible}
Let $X$ be a d-space.
$X$ is dicontractible if  
there is a continuous map $R \ : \{*\} \rightarrow PX$, 
continuously bigraded, 
such that $R_{c,d}$ are homotopy equivalences (hence in particular, all path
spaces of $X$ are contractible).
\comment{\item These homotopy equivalences are natural in the following sense. 
For each $(\gamma,\delta)$ morphism from $(c,d)$ to $(c',d')$ in $\pth{X}$, 
the following diagrams commute up to (classical) homotopy~: 

\begin{center}
  \begin{tikzpicture}[scale=1]
    \node (Fx') at (1,0) {$\scriptstyle \{*\}$};
    \node (Fx) at (1,1.5) {$\scriptstyle \{*\}$};
    \node (Gy') at (3,0) {$\scriptstyle \pth{X}(c',d')$};
    \node (Gy) at (3,1.5) {$\scriptstyle \pth{X}(c,d)$};
\node (H) at (2,.75) {$\scriptscriptstyle \sim$};
    \draw[->] (Fx) -- (Fx');
    \draw[->] (Fx) -- node [above] {$\scriptscriptstyle R_{c,d}$} (Gy);
    \draw[->] (Fx') -- node [below] {$\scriptscriptstyle R_{c',d'}'$} (Gy');
    \draw[->] (Gy) -- (Gy');
    \node (Fi) at (0.7,0.75) {$\scriptscriptstyle Id$};
    \node (Gj) at (3.3,0.75) {$\scriptscriptstyle PX(\gamma,\delta)$};
  \end{tikzpicture}
\end{center}
}
\comment{
\item 
For all $(\alpha,\beta)$ morphism from $(u,v) \in PA$ to $(u',v')\in PA$, 
there exists 
$(c,d)$ and $(c',d')$ 
with $r(c')=u'$, $r(c)=u$, $r(d)=v$ and $r(d')=v'$, and such 
that the following diagram commutes up to homotopy ~: 

\begin{center}
  \begin{tikzpicture}[scale=1]
    \node (Fx') at (1,0) {$\scriptstyle \{*\}$};
    \node (Fx) at (1,1.5) {$\scriptstyle \{*\}$};
    \node (Gy') at (3,0) {$\scriptstyle \pth{X}(c',d')$};
    \node (Gy) at (3,1.5) {$\scriptstyle \pth{X}(c,d)$};
\node (H) at (2,.75) {$\scriptscriptstyle \sim$};
    \draw[->] (Fx) -- (Fx');
    \draw[->] (Fx) -- node [above] {$\scriptscriptstyle R_{c,d}$} (Gy);
    \draw[->] (Fx') -- node [below] {$\scriptscriptstyle R_{c',d'}'$} (Gy');
    \draw[->] (Gy) -- (Gy');
    \node (Fi) at (0.7,0.75) {$\scriptscriptstyle Id$}; 
    \node (Gj) at (3.3,0.75) {$\scriptscriptstyle PX(R_{c',c}(\alpha),R_{d,d'}(\beta))$};
  \end{tikzpicture}
\end{center}
}
\end{definition}


\vskip .2cm

\subsection{Strong dihomotopy equivalence}

Algebraically speaking, there is a simple condition that enforces the bisimulation
condition above (Lemma \ref{strongimpliesdihomequiv}), that we call strong dihomotopy equivalence (see below Definition
\ref{def:strongdihomequiv}). We do not know if it is equivalent to dihomotopy
equivalence for a large class of directed spaces.

\begin{definition}
\label{def:strongdihomequiv}
Let $X$ and $Y$ be two d-spaces. A strong dihomotopy equivalence between $X$ and $Y$ is given by~: 
\begin{itemize}
\item A d-homotopy equivalence between $X$ and $Y$, $f: X \rightarrow Y$ and
$g : Y \rightarrow X$. 
\item A map $F \ : \ PY^f \rightarrow PX$ 
continuously bigraded as 
$F_{a,b} \ : \ PY(f(a),f(b)) \rightarrow PX(a,b)$
such that $(Pf_{a,b},F_{a,b})$ is a homotopy equivalence between $PX(a,b)$ and
$PY(f(a),f(b))$ 
\item 
A map $G \ : \ PX^g \rightarrow PY$, 
continuously bigraded as 
$G_{c,d} \ : \ PX(g(c),g(d)) \rightarrow PY(c,d)$
such that $(Pg_{c,d},G_{c,d})$ is a homotopy equivalence between
$PY(c,d)$ and $PX(g(c),g(d))$ 
\item (a) For all $\alpha \in PX(a',a)$, $v \in PY(f(a),f(b))$ and $\beta \in PX(b,b')$, 
$F_{a',b'}(Pf_{a',a}(\alpha)*v*Pf_{b,b'}(\beta)) \sim \alpha * F_{a,b}(v) * \beta$
\item (b) 
For all $\gamma \in PY(c',c)$, $v \in PX(g(c),g(d))$ and 
$\delta \in PY(d,d')$, 
$G_{c',d'}(Pg_{c',c}(\gamma)*v*Pg_{d,d'}(\delta)) \sim \gamma * G_{g(c),g(d)}(v) 
* \delta$
\item (c) For all 
$\gamma \in PY^f(u',f(a))$, $p \in PY(f(a),f(b))$ and
$\delta \in PY^f(f(b),v')$, 
there exists $(a',b') \in {\cal P}X$ such that $f(a')=u'$, $f(b')=v'$ and 
$F_{a',b'}(\gamma * p * \delta) \sim F_{a',a}(\gamma) * F_{a,b}(p) * F_{b,b'}(\delta)$
\item (d) For all 
$\alpha \in PX^g(u',g(c))$, $q \in PX(g(c),g(d))$ and
$\beta \in PX^g(g(d),v')$, 
there exists $(c',d') \in {\cal P}Y$ such that $g(c')=u'$, $g(d')=v'$ and 
$G_{c',d'}(\alpha * q * \beta) \sim G_{c',c}(\gamma) * G_{c,d}(q) * G_{d,d'}(\delta)$
\end{itemize}
\end{definition}

\begin{lemma}
\label{strongimpliesdihomequiv}
Strong dihomotopy equivalences are dihomotopy equivalences
\end{lemma}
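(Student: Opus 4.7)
The plan is to take the data $(f, g, F, G)$ of a strong dihomotopy equivalence and verify, diagram by diagram, the four naturality-up-to-homotopy conditions of Definition~\ref{def:dihomotopyequiv}. The homotopy-equivalence data is already built into the strong version, so what remains is, for each of the four diagrams, to exhibit the required existential witness and check that both squares commute up to homotopy. Throughout I will use two basic facts: since $f, g$ are dmaps, $Pf$ and $Pg$ preserve concatenation of dipaths strictly; and the homotopy equivalences give $Pf \circ F \sim Id$, $F \circ Pf \sim Id$, and the analogous identities for $Pg$ and $G$.

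For the first pair (given $(\alpha,\beta) \in \mathcal{P}X$, produce $(\gamma,\delta) \in \mathcal{P}Y$), I would take $\gamma = Pf_{a',a}(\alpha)$, $\delta = Pf_{b,b'}(\beta)$ for the $f$-diagram on the left: the $Pf$-square then commutes on the nose by concatenation-preservation of $Pf$, and the $F$-square commutes up to homotopy by condition~(a) applied to $\gamma, \delta$. For the $g$-diagram on the right, the hypothesis forces $(\alpha,\beta)$ to have endpoints in the image of $g$, so condition~(d) supplies indices $c', d'$ with $g(c') = u'$, $g(d') = v'$; I would then take $\gamma = G_{c',c}(\alpha)$, $\delta = G_{d,d'}(\beta)$. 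The $G$-square of this diagram is exactly the content of~(d), while the $Pg$-square reduces, via concatenation-preservation of $Pg$, to the identities $Pg_{c',c}(G_{c',c}(\alpha)) \sim \alpha$ and $Pg_{d,d'}(G_{d,d'}(\beta)) \sim \beta$.

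The second pair is dual. For the $f$-diagram on the left (given $(\gamma,\delta) \in \mathcal{P}Y$, produce $(\alpha,\beta) \in \mathcal{P}X$), condition~(c) supplies indices $(a', b')$ with $f(a') = u'$, $f(b') = v'$, and I would set $\alpha = F_{a',a}(\gamma)$, $\beta = F_{b,b'}(\delta)$: the $F$-square is precisely~(c), and the $Pf$-square follows from concatenation-preservation of $Pf$ together with $Pf \circ F \sim Id$. For the $g$-diagram on the right, the choice $\alpha = Pg_{c',c}(\gamma)$, $\beta = Pg_{d,d'}(\delta)$ makes the $Pg$-square commute strictly, and the $G$-square is exactly condition~(b).

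Rather than a single deep obstacle, the proof is essentially bookkeeping: one must pick the right existential witness for each diagram and match the resulting square with one of the identities (a)--(d). The only genuinely non-routine input is that conditions~(c) and~(d) themselves supply the existential indices $(a', b')$ and $(c', d')$ needed on the target side of the two ``inverse-direction'' diagrams; the concatenation identities alone, without these existence clauses, would not provide them. This is where I expect the heaviest care to be required—especially in checking that the target indices produced by~(c) and~(d) are compatible with the bottom rows of the diagrams, which are themselves indexed by data of exactly this form.
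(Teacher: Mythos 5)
Your proof is correct and follows essentially the same route as the paper: the same witnesses ($\gamma=Pf_{a',a}(\alpha)$, $\delta=Pf_{b,b'}(\beta)$ for the first diagram, the indices supplied by (c) and (d) for the two ``inverse-direction'' diagrams, and $\alpha=Pg_{c',c}(\gamma)$, $\beta=Pg_{d,d'}(\delta)$ for the last) matched to conditions (a)--(d) in the same way. If anything you are slightly more explicit than the paper about why the $Pf$- and $Pg$-squares commute (strict concatenation-preservation plus $Pg\circ G\sim Id$), which the paper dismisses as commuting ``for free.''
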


\begin{proof}
Let $f \ : \ X \rightarrow Y$ be a strong dihomotopy equivalence ; it comes
with $g \ : \ Y \rightarrow X$, $G \ : \ PX \rightarrow PY$ and $F \ : \ PY 
\rightarrow PX$. Let $(\alpha,\beta) \in {\cal P}X$, with $\alpha \in PX(a',a)$
and $\beta \in PX(b,b')$. 
In order to prove that $f$ is a dihomotopy equivalence, we must find $(\gamma,
\delta)$ such that the diagram below involving $F_{a,b}$ and $F_{a',b'}$ commutes
up to homotopy (the other diagram is commutative, for free) 
\begin{center}
  \begin{tikzpicture}[scale=1]
\matrix (m) [matrix of math nodes,row sep=3em,column sep=4em,minimum width=2em]
  {
     \scriptstyle PX(a,b) & \scriptstyle PY(f(a),f(b)) \\
     \scriptstyle PX(a',b') & \scriptstyle PY(f(a'),f(b')) \\};
  \path[-stealth]
    (m-1-1) edge node [left] {$\scriptstyle PX(\alpha,\beta)$} (m-2-1)
            edge [out=10,in=170] node [above] {${\scriptstyle Pf_{a,b}}$} (m-1-2)
(m-1-2) edge [out=-170,in=-10] node [below] {$\scriptstyle F_{a,b}$} (m-1-1)
(m-2-1) edge [out=10,in=170] node [above] {${\scriptstyle Pf_{a',b'}}$} (m-2-2)
(m-2-2) edge [out=-170,in=-10] node [below] {$\scriptstyle F_{a',b'}$} (m-2-1)
    (m-1-2) edge node [dashed,right] {$\scriptstyle PX(\gamma,\delta)$} (m-2-2);
\end{tikzpicture}
\end{center}
This commutes indeed with $\gamma=Pf_{a',a}(\alpha)$ and $\delta=Pf_{b,b'}(\beta)$
because of property (a) of strong dihomotopy equivalence $f$. 

Now, consider the diagram below, involving $G_{c,d}$ and $G_{c',d'}$ 

\begin{center}
  \begin{tikzpicture}[scale=1]
\matrix (m) [matrix of math nodes,row sep=3em,column sep=4em,minimum width=2em]
  {
     \scriptstyle PX(g(c),g(d)) & \scriptstyle PY(c,d) \\
     \scriptstyle PX^g(u',v') & \scriptstyle PY(c',d') \\};
  \path[-stealth]
    (m-1-1) edge node [left] {$\scriptstyle PX(\alpha,\beta)$} (m-2-1)
            edge [out=10,in=170] node [above] {${\scriptstyle G_{c,d}}$} (m-1-2)
(m-1-2) edge [out=-170,in=-10] node [below] {$\scriptstyle Pg_{c,d}$} (m-1-1)
(m-2-1) edge [out=10,in=170] node [above] {${\scriptstyle G_{c',d'}}$} (m-2-2)
(m-2-2) edge [out=-170,in=-10] node [below] {$\scriptstyle Pg_{c',d'}$} (m-2-1)
    (m-1-2) edge node [dashed,right] {$\scriptstyle PX(\gamma,\delta)$} (m-2-2);
\end{tikzpicture}
\end{center}
with $g(c')=u'$ and $g(d')=v'$. 
Let $\gamma=G_{c',d'}(\alpha)$ and $\delta=G_{d,d'}(\beta)$. Property $(d)$ implies
that the diagram above commutes up to homotopy.  


Similarly, let $(\gamma,\delta) \in {\cal P}Y$, property (c) of $f$ implies
that 
the following diagram 
commutes up to homotopy by taking $\alpha=F_{a',a}(\gamma)$ and $\beta=F_{b,b'}(\delta)$

\begin{center}
  \begin{tikzpicture}[scale=1]
\matrix (m) [matrix of math nodes,row sep=3em,column sep=4em,minimum width=2em]
  {
     \scriptstyle PX(a,b) & \scriptstyle PY(f(a),f(b)) \\
     \scriptstyle PX(a',b') & \scriptstyle PY^f(u',v') \\};
  \path[-stealth]
    (m-1-1) edge node [left] {$\scriptstyle PX(\alpha,\beta)$} (m-2-1)
            edge [out=10,in=170] node [above] {${\scriptstyle Pf_{a,b}}$} (m-1-2)
(m-1-2) edge [out=-170,in=-10] node [below] {$\scriptstyle F_{a,b}$} (m-1-1)
(m-2-1) edge [out=10,in=170] node [above] {${\scriptstyle Pf_{a',b'}}$} (m-2-2)
(m-2-2) edge [out=-170,in=-10] node [below] {$\scriptstyle F_{a',b'}$} (m-2-1)
    (m-1-2) edge node [dashed,right] {$\scriptstyle PX(\gamma,\delta)$} (m-2-2);
\end{tikzpicture}
\end{center}
with $f(a')=u'$ and $f(b')=v'$.

And finally, property (b) implies that the following diagram commutes up to
homotopy, by taking $\alpha=Pg_{c',d'}(\gamma)$ and $\beta=Pg_{d,d'}(\delta)$

\begin{center}
  \begin{tikzpicture}[scale=1]
\matrix (m) [matrix of math nodes,row sep=3em,column sep=4em,minimum width=2em]
  {
     \scriptstyle PX(g(c),g(d)) & \scriptstyle PY(c,d) \\
     \scriptstyle PX(g(c'),g(d')) & \scriptstyle PY(c',d') \\};
  \path[-stealth]
    (m-1-1) edge node [left] {$\scriptstyle PX(\alpha,\beta)$} (m-2-1)
            edge [out=10,in=170] node [above] {${\scriptstyle G_{c,d}}$} (m-1-2)
(m-1-2) edge [out=-170,in=-10] node [below] {$\scriptstyle Pg_{c,d}$} (m-1-1)
(m-2-1) edge [out=10,in=170] node [above] {${\scriptstyle G_{c',d'}}$} (m-2-2)
(m-2-2) edge [out=-170,in=-10] node [below] {$\scriptstyle Pg_{c',d'}$} (m-2-1)
    (m-1-2) edge node [dashed,right] {$\scriptstyle PX(\gamma,\delta)$} (m-2-2);
\end{tikzpicture}
\end{center}
\end{proof}

\subsection{Simple properties and examples of directed homotopy equivalences}

The first obvious (but important) observation is that directed homotopy equivalence
refines ordinary homotopy equivalence. Also, 
directed homotopy equivalence is an invariant of dihomeomorphic dspaces~: 

\begin{lemma}
Let $X$, $Y$ be two directed spaces. Suppose there exists $f \ : \ X \rightarrow Y$ a dmap,
which has an inverse, also a dmap. Then $X$ and $Y$ are directed homotopy equivalent.
\end{lemma}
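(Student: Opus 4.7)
The plan is to show that the given $f$, together with $g = f^{-1}$, fits the strong dihomotopy equivalence framework of Definition \ref{def:strongdihomequiv}, so that Lemma \ref{strongimpliesdihomequiv} finishes the job. The point is that a dihomeomorphism makes every equation in that definition hold \emph{on the nose}, so the only real work is bookkeeping.

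First I would set $g = f^{-1}$, which by hypothesis is itself a dmap, and record that $g \circ f = \mathrm{id}_X$ and $f \circ g = \mathrm{id}_Y$ as strict equalities; in particular these compositions are homotopic to the identities, so $(f,g)$ is a d-homotopy equivalence in the sense already introduced. Then I would define $F \ : \ PY^f \rightarrow PX$ by $F_{a,b}(p) = g \circ p$; since $g(f(a)) = a$ and $g(f(b)) = b$, this lands in $PX(a,b)$ and is continuously bigraded because $g$ is continuous and post-composition is continuous for the compact-open topology. The same recipe gives $G_{c,d}(q) = f \circ q$ from $PX(g(c),g(d))$ to $PY(c,d)$. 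Because $g \circ f = \mathrm{id}_X$ and $f \circ g = \mathrm{id}_Y$, the compositions $F_{a,b} \circ Pf_{a,b}$ and $Pf_{a,b} \circ F_{a,b}$ are literal identities (and symmetrically for $G$ and $Pg$), so in particular they are homotopy equivalences.

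Next I would verify conditions (a)--(d) of Definition \ref{def:strongdihomequiv}. Conditions (a) and (b) say that $F$ and $G$ intertwine the sandwich operation $\alpha * (-) * \beta$ appropriately; since $g$ (resp.\ $f$) commutes with concatenation and $g \circ f = \mathrm{id}_X$, for any $\alpha \in PX(a',a)$, $v \in PY(f(a),f(b))$, $\beta \in PX(b,b')$ one gets
\[
F_{a',b'}\bigl(Pf_{a',a}(\alpha) * v * Pf_{b,b'}(\beta)\bigr) = g\circ Pf(\alpha) * g\circ v * g\circ Pf(\beta) = \alpha * F_{a,b}(v) * \beta,
\]
a strict equality, and dually for (b). For (c), given $\gamma \in PY^f(u',f(a))$, $p \in PY(f(a),f(b))$, $\delta \in PY^f(f(b),v')$, the choice $a' = g(u')$, $b' = g(v')$ satisfies $f(a') = u'$, $f(b') = v'$, and then $F_{a',b'}(\gamma * p * \delta) = g \circ \gamma * g \circ p * g \circ \delta = F_{a',a}(\gamma) * F_{a,b}(p) * F_{b,b'}(\delta)$ again on the nose; condition (d) is symmetric. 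Applying Lemma \ref{strongimpliesdihomequiv} then yields that $f$ is a dihomotopy equivalence.

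I do not expect any real obstacle: all the homotopies required by Definitions \ref{def:dihomotopyequiv} and \ref{def:strongdihomequiv} collapse to strict equalities, and the only point that deserves a sentence of care is the well-definedness of $F$ and $G$ on the subspaces $PY^f$ and $PX^g$, which is immediate from $f$ and $g$ being mutually inverse bijections on the underlying points.
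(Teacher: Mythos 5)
Your proof is correct, and the essential content is identical to the paper's: the paper's entire proof is ``Take $g=f^{-1}$, $F=Pg$ and $G=Pf$; this data forms a directed homotopy equivalence,'' which is exactly your choice of $F_{a,b}(p)=g\circ p$ and $G_{c,d}(q)=f\circ q$. The only difference is organizational: where the paper asserts directly that the data satisfies Definition~\ref{def:dihomotopyequiv} (all six diagrams commuting on the nose), you route the verification through Definition~\ref{def:strongdihomequiv} and Lemma~\ref{strongimpliesdihomequiv}. That detour is legitimate and arguably tidier, since conditions (a)--(d) become strict equalities for a dihomeomorphism and the lemma then does the diagram-chasing for you; the one small point worth making explicit in condition (c) is that your chosen $(a',b')=(g(u'),g(v'))$ does lie in ${\cal P}X$, which follows because $g$ is a dmap and so carries the dipath $\gamma*p*\delta$ from $u'$ to $v'$ to a dipath from $a'$ to $b'$.
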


\begin{proof}
Take $g=f^{-1}$, $F=Pg$ and $G=Pf$. This data forms a directed homotopy equivalence.
\end{proof}

Now, natural homology \cite{naturalhomology} is going to be an invariant of
dihomotopy equivalence, as it should be~: 

\begin{lemma}
\label{lemma:bisim}
Let $X$, $Y$ be two directed spaces. Suppose $X$ and $Y$ are directed homotopy equivalent.
Then $X$ and $Y$ have bisimilar natural homotopy and homology (in the sense of \cite{CSL16}). 
\end{lemma}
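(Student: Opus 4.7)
The plan is to read off a bisimulation of natural systems directly from the data $(f,g,F,G)$ of the dihomotopy equivalence. Recall that the natural homology (resp.\ homotopy) of $X$ is the functor $\PP{X}\to\Ab$ (resp.\ to sets/groups) sending $(a,b)$ to $H_*(PX(a,b))$ (resp.\ $\pi_*(PX(a,b))$) and an extension $(\alpha,\beta)$ to the induced map $PX(\alpha,\beta)_*$. A bisimulation in the sense of \cite{CSL16} is a relation $R$ between the objects of $\PP{X}$ and $\PP{Y}$, together with isomorphisms between the corresponding groups, satisfying a forth-and-back condition on morphisms.

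First, I would define $R$ by putting $(a,b)\, R\, (c,d)$ whenever $c=f(a), d=f(b)$ or $a=g(c), b=g(d)$ (and then close under what is forced by the forth/back conditions). On such related pairs, the isomorphism of groups is provided directly by the lemma's hypothesis: since $(Pf_{a,b},F_{a,b})$ is a homotopy equivalence between $PX(a,b)$ and $PY(f(a),f(b))$, it induces isomorphisms $(Pf_{a,b})_*$ on every $\pi_n$ and $H_n$, with inverse $(F_{a,b})_*$; symmetrically for $(Pg_{c,d},G_{c,d})$. This takes care of the ``local'' data of the bisimulation.

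Next, I would check the forth-and-back conditions on morphisms, which is where the naturality squares of Definition~\ref{def:dihomotopyequiv} are used in an essential way. Given an extension $(\alpha,\beta)$ in $\PP{X}$ from $(a,b)$ to $(a',b')$, the first square in the definition produces $(\gamma,\delta)\in\PP{Y}$ from $(f(a),f(b))$ to $(f(a'),f(b'))$ such that the squares involving $Pf$ and $F$ commute up to homotopy; passing to $\pi_*$ or $H_*$ turns ``up to homotopy'' into on-the-nose commutativity, so we obtain the forth condition. The three remaining diagrams, applied in turn, give the back condition, and the symmetric conditions centered on $g$ handle the pairs related by the second clause of $R$. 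For the ``on the nose'' squares already displayed after Definition~\ref{def:dihomotopyequiv}, no argument is needed: they give strict functoriality of $Pf_*$ and $Pg_*$.

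Finally, I would assemble these pieces into the format required by the definition of bisimilar natural systems in \cite{CSL16}: the relation $R$ on objects, the family of isomorphisms on values, and the forth/back conditions on extensions. The main obstacle is a bookkeeping one rather than a conceptual one: one has to make sure that iterating the forth/back procedure (i.e.\ relating $(a,b)$ to $(f(a),f(b))$, then to $(gf(a),gf(b))$, etc.) remains coherent, which ultimately rests on the fact that $gf$ and $fg$ are homotopic to the identities as plain maps and that the induced maps on homotopy/homology therefore agree with those coming from $Pf_*$ and $Pg_*$ under the isomorphisms above; this is the point where the ordinary d-homotopy equivalence between $X$ and $Y$, not just the ``dipath-level'' data, is really needed.
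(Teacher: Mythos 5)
Your proposal is correct and follows essentially the same route as the paper: the paper's proof also takes the relation $\{((a,b),Pf_{a,b},(f(a),f(b)))\}\cup\{((g(c),g(d)),Pg_{c,d},(c,d))\}$, uses that $Pf_{a,b}$ and $Pg_{c,d}$ are homotopy equivalences to get the required isomorphisms in homotopy/homology, and invokes the extension diagrams of Definition~\ref{def:dihomotopyequiv} for heredity. The only difference is that you spell out details the paper leaves implicit (in particular that no further closure of the relation is actually forced), so nothing needs to change.
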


\begin{proof}
Call $f \ : \ X \rightarrow Y$ and $g \ : \ Y \rightarrow X$ the underlying dmaps, forming the
homotopy equivalence which is a directed homotopy equivalence. 

The bisimulation relation we are looking for is the relation~: 
$$\{ ((a,b),Pf_{a,b},(f(a),f(b))) \ | \ (a,b) \in \Gamma_X \} \cup \{ ((g(c),g(d)),Pg_{c,d},(c,d)) \ | \ (c,d) \in \Gamma_Y \}
$$
The diagrams defining the directed homotopy equivalence imply that $R$ is hereditary with respect
to extension maps. 
\end{proof}



Unfortunately, unlike Dwyer-Kan equivalences, or classical homotopy equivalences,
our dihomotopy equivalences do not have the 2-out-of-3 property. We only
have preservation by composition, as shown in next Lemma, but also, for surjective
dihomotopy equivalences, two thirds of the 2-out-of-3 property, as shown in 
Proposition \ref{bit2}. 

\begin{lemma}
Compositions of dihomotopy equivalences are dihomotopy equivalences. 
\end{lemma}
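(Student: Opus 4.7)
The plan is to take two dihomotopy equivalences $(f_1,g_1,F_1,G_1) \ : \ X \rightarrow Y$ and $(f_2,g_2,F_2,G_2) \ : \ Y \rightarrow Z$ and assemble the composite data $(f,g,F,G)$ with $f=f_2\circ f_1$, $g=g_1\circ g_2$, $F_{a,b} = (F_1)_{a,b} \circ (F_2)_{f_1(a),f_1(b)}$ and $G_{c,d} = (G_2)_{c,d} \circ (G_1)_{g_2(c),g_2(d)}$. First I would check that $f$ and $g$ are dmaps (composites of dmaps) and that $f\circ g = f_2\circ f_1 \circ g_1 \circ g_2 \sim f_2\circ g_2 \sim Id_Z$ by the standard composition of homotopies, and similarly for $g\circ f \sim Id_X$, so the underlying d-homotopy equivalence of the composite is in place.

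Next I would verify the bigraded homotopy equivalence property. By functoriality of the path space construction, $Pf_{a,b} = (Pf_2)_{f_1(a),f_1(b)} \circ (Pf_1)_{a,b}$ and dually for $Pg_{c,d}$. Since each layer forms a homotopy equivalence with its corresponding $F_i$ (resp.\ $G_i$), the pair $(Pf_{a,b},F_{a,b})$ is a homotopy equivalence between $\pth{X}(a,b)$ and $\pth{Z}(f(a),f(b))$, and symmetrically for $G$; continuity of the bigrading is preserved under composition of continuously bigraded maps.

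The core of the argument is the six naturality squares. I will treat the first as representative. Given $(\alpha,\beta) \in \PP{X}$, the dihomotopy equivalence condition for $f_1$ produces $(\gamma_1,\delta_1) \in \PP{Y}$ such that the $f_1$-square commutes up to homotopy; applying the $f_2$ condition to $(\gamma_1,\delta_1)$ yields $(\gamma,\delta) \in \PP{Z}$ such that the $f_2$-square commutes up to homotopy. Stacking the two squares vertically (the $f_1$ square on top, the $f_2$ square on the bottom) gives the required square for $f$ up to homotopy, since $Pf_{a,b}=(Pf_2)_{f_1(a),f_1(b)}\circ (Pf_1)_{a,b}$ and $F_{a,b}=(F_1)_{a,b}\circ (F_2)_{f_1(a),f_1(b)}$ line up exactly along this pasting. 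The reverse-direction squares, and the two $G$-squares, are handled dually by invoking the condition for the outer equivalence first and then the inner one.

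The main obstacle, and where care is needed, is the bookkeeping of endpoints and restricted source spaces $\pth{Z}^{f}$, $\pth{X}^{g}$: when we appeal to the inner condition, the extension it produces must actually live in $\PP{Y}$ with endpoints of the form $(f_1(a'),f_1(b'))$ (respectively $(g_2(c'),g_2(d'))$) so that the outer condition can be applied to it. This is automatic for the ``forward'' squares (the extension for $f_1$ is exactly an object of $\PP{Y}$), but for the ``reverse'' squares one must feed in a $(\gamma,\delta)\in\PP{Z}$, produce via the $f_2$ condition an extension $(\gamma_1,\delta_1)\in\PP{Y}$ landing in the image of $f_1$, and only then invoke the reverse condition for $f_1$; this step relies on the extension clause of Definition~\ref{def:dihomotopyequiv} which guarantees a preimage $(\alpha',\beta')\in\PP{X}$ with $f_1(\alpha'),f_1(\beta')$ matching the endpoints of $(\gamma_1,\delta_1)$. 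Once this bookkeeping is arranged, the conclusion follows by pasting homotopy-commutative squares.
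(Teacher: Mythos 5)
Your proposal is correct and follows exactly the paper's route: the paper forms the same composite quadruple $(f_2\circ f_1,\; g_1\circ g_2,\; F_1\circ F_2,\; G_2\circ G_1)$ and simply declares the verification obvious, whereas you spell out the details (composition of the underlying homotopy equivalences, functoriality $Pf_{a,b}=(Pf_2)_{f_1(a),f_1(b)}\circ(Pf_1)_{a,b}$, and pasting of the homotopy-commutative naturality squares, including the endpoint bookkeeping for the reverse squares). Nothing to object to.
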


\begin{proof}
Suppose $f_1 \ : \ X \rightarrow Y$ and $f_2 \ : \ Y \rightarrow Z$ 
are dihomotopy equivalences.
We have quadruples 
$(f_1,g_1,F_1,G_1)$, 
$(f_2,g_2,F_2,G_2)$ as in Definition \ref{def:dihomotopyequiv}. 
Now, it is obvious to see that
its composite $(f_2 \circ f_1, g_1 \circ g_2,F_1 \circ F_2,G_2 \circ G_1)$ is a dihomotopy
equivalence from $X$ to $Z$. 
\end{proof}

The problem for getting a general 2-out-of-3 property on dihomotopy equivalences
can be exemplified as follows. 
Suppose that $f^1$ is a dihomotopy equivalence and that $f^2 \circ f^1$ is a
dihomotopy equivalence. 
In particular, by 2-out-of-3 on classical homotopy equivalence, we know that 
$f^1$ is a homotopy equivalence, with homotopy inverse $g^1$. 
Consider now the following composites, for all $(a,b)\in {\cal P}X$

\begin{center}
  \begin{tikzpicture}[scale=1]
\matrix (m) [matrix of math nodes,row sep=3em,column sep=4em,minimum width=2em]
  {
     \scriptstyle PX(a,b) & \mbox{$\scriptstyle PY(f^1(a),f^1(b))$} & 
\mbox{$\scriptstyle PZ(f^2 \circ f^1(a),f^2\circ f^1(b))$} \\
};
  \path[-stealth]
    (m-1-1) edge node [above] {\mbox{${\scriptstyle {Pf^1}_{a,b}}$}} (m-1-2)
(m-1-2) edge node [above] {\mbox{$\scriptstyle Pf^2_{f^1(a),f^1(b)}$}} (m-1-3)
    (m-1-1) edge [out=-10,in=-170] node [below] {\mbox{$\scriptstyle Pf^2 \circ f^1_{a,b}$}} (m-1-3);
\end{tikzpicture}
\end{center}
Because of 2-out-of-3 for classical homotopy equivalences, and as
$Pf^2\circ f^1$ and $Pf^1$ in the diagram above are homotopy equivalences, 
$Pf^2_{f^1(a),f^1(b)}$ is a homotopy equivalence. But we need it to be a homotopy
equivalence for all $(c,d) \in {\cal P}Y$, not only the ones in the image of $f^1$.
Similarly to Dwyer-Kan equivalences (see e.g. \cite{bergner04}), 
it is reasonable to add in the definition of
dihomotopy equivalences the assumption that $f$ is surjective on points. Then we
have

\begin{proposition}
\label{bit2}
If $f^1 \ : \ X \rightarrow Y$ 
and $f^2 \ : \ Y \rightarrow Z$ are such that
$f^1$ and $f^2 \circ f^1$ are surjective dihomotopy equivalences, 
then $f^2$ is a surjective dihomotopy equivalence. 
\end{proposition}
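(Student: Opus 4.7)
The plan is to assemble the required data $(f^2, g^2, F^2, G^2)$ from the data $(f^1, g^1, F^1, G^1)$ for $f^1$ and $(f^2 \circ f^1, h, H, K)$ for the composite. Surjectivity of $f^2$ is immediate, since for $z \in Z$ the surjectivity of $f^2 \circ f^1$ yields $x \in X$ with $f^2(f^1(x)) = z$, so $f^1(x)$ is a preimage of $z$ under $f^2$. I take $g^2 := f^1 \circ h$, which is a dmap as a composite of dmaps. That $(f^2, g^2)$ is a classical homotopy inverse pair follows from $f^2 \circ g^2 = (f^2 \circ f^1) \circ h \sim \operatorname{id}_Z$ and $g^2 \circ f^2 \circ f^1 = f^1 \circ (h \circ f^2 \circ f^1) \sim f^1$, whence composing on the right with $g^1$ gives $g^2 \circ f^2 \sim f^1 \circ g^1 \sim \operatorname{id}_Y$. (That $f^2$ is itself a dmap will follow from the path-lifting in the next paragraph applied to individual dipaths of $Y$.)

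On path spaces, surjectivity of $f^1$ lets me write each $(c,d) \in \Gamma_Y$ as $(f^1(a), f^1(b))$ for some $(a,b) \in \Gamma_X$. I then set
\[
F^2_{c,d} \;:=\; Pf^1_{a,b} \circ H_{a,b}, \qquad G^2_{c,d} \;:=\; K_{c,d} \circ F^1_{h(c),h(d)}.
\]
Writing $u = Pf^1_{a,b}$ and $v = Pf^2_{c,d}$, so that $vu = P(f^2 \circ f^1)_{a,b}$, one checks that $(v, uH)$ is a homotopy equivalence: $v(uH) = (vu)H \sim \operatorname{id}$ from the equivalence $(vu, H)$, and $(uH)v = u(Hv) \sim u F^1 \sim \operatorname{id}$, where $Hv \sim F^1$ follows from $Hvu \sim \operatorname{id}$ by composing on the right with $F^1$ and using $uF^1 \sim \operatorname{id}$. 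Symmetrically, using $Ph \circ K \sim \operatorname{id}$ and $F^1 \circ Pf^1 \sim \operatorname{id}$, one sees that $(Pg^2_{c,d}, G^2_{c,d}) = (Pf^1 \circ Ph,\; K \circ F^1)$ is a homotopy equivalence between $PZ(c,d)$ and $PY(g^2(c), g^2(d))$.

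The remaining step is to verify the six extension/bisimulation diagrams of Definition \ref{def:dihomotopyequiv}. The strategy is pasting: given an extension $(\alpha, \beta) \in \mathcal{P}Y$, first use the $F^1$-naturality for $f^1$ to produce a matching extension $(F^1(\alpha), F^1(\beta)) \in \mathcal{P}X$, then use the $H$-naturality for the composite $f^2 \circ f^1$ to produce a matching extension in $\mathcal{P}Z$; commutativity of the $F^2$- and $G^2$-squares is then obtained by decomposing along $F^2 = Pf^1 \circ H$ and $G^2 = K \circ F^1$ and pasting the commutativity diagrams already established for $f^1$ and for $f^2 \circ f^1$. The dual direction, starting from an extension in $\mathcal{P}Z$, is handled analogously via $g^2 = f^1 \circ h$, using $K$- and $G^1$-naturality. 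The principal obstacle is technical: the assignment $(c,d) \mapsto (a,b)$ of preimages under $f^1$ used in defining $F^2$ is not canonical, so continuous bigrading of $F^2$ is not automatic; one must either assume $f^1$ admits a continuous section over $\Gamma_Y$, or replace the strict preimage by the homotopic choice $(g^1(c), g^1(d))$ and compensate by pre- and post-concatenating short dipaths realising $f^1 \circ g^1 \sim \operatorname{id}_Y$. This coherence issue is where the heaviest bookkeeping will lie.
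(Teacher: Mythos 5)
Your construction is essentially the paper's own proof: the same composite $F^2_{c,d}=Pf^1_{a,b}\circ H_{a,b}$ (the paper's $F^{2r}=Pf^1_{a,b}\circ F^{21}_{a,b}$), the same pointwise 2-out-of-3 argument for classical homotopy equivalences propagated over all of $\Gamma_Y$ via surjectivity of $f^1$, an analogous composite inverting $Pg^2$, and the same pasting strategy for the six extension diagrams (of which the paper, like you, only carries out one explicitly). The continuity issue you flag --- that $F^2_{c,d}$ depends on a choice of $f^1$-preimage $(a,b)$ of $(c,d)$, so continuous bigrading is not automatic --- is a real point that the paper silently glosses over by simply asserting that the composite is continuously bigraded; your proposed remedies are sensible additions rather than deviations.
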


\begin{proof}
In that
case, we get that $Pf^2$ induces homotopy equivalences from all $PY(c,d)$ (call
the homotopy inverse $F^2$). 
Now, notice that, denoting by $F^{21}_{a,b}$ 
the homotopy inverse of $Pf^2\circ f^1_{a,b}=Pf^2\circ Pf^1_{a,b}$, and by $F^1_{a,b}$ the homotopy
inverse of $Pf^1_{a,b}$, $F^{2r} = Pf^1_{a,b} \circ F^{21}_{a,b}$ is a right homotopy 
inverse to $Pf^2_{f^1(a),f^1(b)}$, because
$$\begin{array}{rcl}
Pf^2_{f^1(a),f^1(b)} \circ (Pf^1_{a,b} \circ F^{21}_{a,b}) & = & (Pf^2_{f^1(a),f^1(b)} \circ Pf^1_{a,b})\circ F^{21}_{a,b}\\
& = & (P(f^2 \circ f^1)_{a,b}) \circ F^{21}_{a,b} \\
& \sim & Id_{PZ_{f^2 \circ f^1(a),f^2 \circ f^1(b)}} \\
\end{array}$$
We therefore have a right homotopy inverse $F^{2r}_{c,d}$ and a homotopy inverse
$F^{2}_{c,d}$ of $Pf^2_{c,d}$. So, 
$$\begin{array}{rcl}
F^{2r}_{c,d} & \sim & (F^2_{c,d} \circ Pf^2_{c,d}) \circ F^{2r}_{c,d} \\
 & = & F^2_{c,d} \circ (Pf^2_{c,d} \circ F^{2r}_{c,d}) \\
& \sim & F^2_{c,d}
\end{array}$$
and
$$\begin{array}{rcl}
F^{2r}_{c,d} \circ Pf^2_{c,d} & \sim & F^2_{c,d} \circ Pf^2_{c,d} \\
 & \sim & Id_{PY(c,d)}
\end{array}$$
therefore $F^{2r}_{c,d}$ is a homotopy inverse of $Pf^2_{c,d}$. Moreover, as
a composition of maps $Pf^1_{c,d}$ with $F^{21}_{c,d}$ which are continuous in 
$(c,d) \in {\cal P}Y$, it is a continuously bigraded map on $(c,d)$. 

We also have to look at the map induced by $g^2$. In the following diagram  
\begin{center}
  \begin{tikzpicture}[scale=1]
\matrix (m) [matrix of math nodes,row sep=3em,column sep=4em,minimum width=2em]
  {
     \mbox{$\scriptstyle PX(g^1 \circ g^2(e),g^1 \circ g^2(f))$} & 
\mbox{$\scriptstyle PY(g^2(e),g^2(f))$} & 
\mbox{$\scriptstyle PZ(e,f)$} \\
};
  \path[-stealth]
    (m-1-2) edge node [above] {\mbox{${\scriptstyle {Pg^1}_{g^2(e),g^2(f)}}$}} (m-1-1)
(m-1-3) edge node [above] {\mbox{$\scriptstyle Pg^2_{e,f}$}} (m-1-2)
    (m-1-3) edge [out=-170,in=-10] node [below] {\mbox{$\scriptstyle Pg^1 \circ g^2_{e,f}$}} (m-1-1);
\end{tikzpicture}
\end{center}
by 2-out-of-3 for classical homotopy equivalences, without any other assumptions,
we get that $Pg^2_{e,f}$ is a homotopy equivalence (with homotopy inverse 
$G^2_{e,f}$) for all path spaces $PZ(e,f)$. 
We note as above that, denoting by $G^{12}_{e,f}$ the homotopy inverse
of $Pg^1\circ g^2_{e,f}$, $G^{2l}_{e,f}=G^{12}_{e,f} \circ Pg^1_{g^2(e),g^2(f)}$ is
a left homotopy inverse of $Pg^2_{e,f}$. As before, we easily get that
$G^{2l}_{e,f}\sim G^2_{e,f}$ and $G^{2l}_{e,f}$ is a homotopy inverse of $Pg^2_{e,f}$
which forms a continuously bigraded map. 


Now we have to check the extension diagrams of Definition \ref{def:dihomotopyequiv}. 
Let $(\alpha,\beta) \in {\cal P}Y$ with $\alpha \in PY(c',c)$, $\beta \in 
PY(d,d')$. As $f^1$ is surjective, $c=f(a)$ and $d=f(b)$ for some $(a,b) \in
{\cal P}X$. As $f^1$ is a dihomotopy equivalence, we have the existence
of $(\alpha',\beta') \in {\cal P}X$ as in the diagram below. 
 
\comment{
\begin{center}
  \begin{tikzpicture}[scale=1]
\matrix (m) [matrix of math nodes,row sep=3em,column sep=4em,minimum width=2em]
  {
     \scriptstyle PX(a,b) & \scriptstyle PY(c,d) \\
     \scriptstyle PX(a',b') & \scriptstyle PY(c',d') \\};
  \path[-stealth]
    (m-1-1) edge node [left] {$\scriptstyle PX(\alpha',\beta')$} (m-2-1)
            edge node [above] {${\scriptstyle Pf_{a,b}}$} (m-1-2)
(m-2-1) edge node [above] {${\scriptstyle Pf_{a',b'}}$} (m-2-2)
    (m-1-2) edge node [dashed,right] {$\scriptstyle PY(\gamma,\delta)$} (m-2-2);
\end{tikzpicture}
\end{center}
}

Now, we use the fact that $f^2 \circ f^1$ is a dihomotopy equivalence, and
we get a map 
$(\alpha,\beta) \in 
  {\cal P}Z$ 
such that the following diagram commutes up to homotopy

\begin{center}
  \begin{tikzpicture}[scale=1]
\matrix (m) [matrix of math nodes,row sep=3em,column sep=4em,minimum width=2em]
  {
     \scriptstyle PX(a,b) & \scriptstyle PY(c,d) & \mbox{$\scriptstyle PZ(f^2(c),f^2(d))$} \\
     \scriptstyle PX(a',b') & \scriptstyle PY(c',d') & \mbox{$\scriptstyle PZ(f^2(c'),f^2 (d'))$} \\};
  \path[-stealth]
    (m-1-1) edge node [left] {$\scriptstyle PX(\alpha',\beta')$} (m-2-1)
            edge node [above] {${\scriptstyle Pf^1_{a,b}}$} (m-1-2)
(m-2-1) edge node [above] {${\scriptstyle Pf^1_{a',b'}}$} (m-2-2)
    (m-1-2) edge node [right] {$\scriptstyle PY(\gamma,\delta)$} (m-2-2)
    (m-1-3) edge node [right] {$\scriptstyle PZ(\alpha,\beta)$} (m-2-3)
(m-1-3) edge [in=30,out=150] node [above] {${\scriptstyle F^{21}_{c,d}}$} (m-1-1)
(m-2-3) edge [in=-30,out=-150] node [above] {${\scriptstyle F^{21}_{c,d}}$} (m-2-1)
(m-1-3) edge[dashed] node [above] {$\scriptstyle F^2_{c,d}$} (m-1-2)
(m-2-3) edge[dashed] node [above] {$\scriptstyle F^2_{c',d'}$} (m-2-2);
\end{tikzpicture}
\end{center}

In the diagram above, $F^2_{c,d}$ (represented as a dashed arrow) is actually
the composite $Pf^1_{a,b} \circ F^{21}_{c,d}$ as shown before ; similarly,
$F^2_{c',d'}$ is the composite $Pf^1_{a',b'} \circ F^{21}_{c',d'}$. Therefore we
have the extension property needed for $F^2_{c,d}$. The five other diagrams can
be proven in a similar manner, by pulling back or pushing forward the existence
of maps using the diagrams for $Pf^1$, $Pg^1$, $F^1$, $G^1$ (resp.
$Pf^2 \circ f^1$, $Pg^1 \circ g^2$, $F^{21}$ and $G^{12}$). 

\comment{\begin{center}
  \begin{tikzpicture}[scale=1]
\matrix (m) [matrix of math nodes,row sep=3em,column sep=4em,minimum width=2em]
  {
     \scriptstyle PX(a,b) & \scriptstyle PZ(f^2(c),f^2(d)) \\
     \scriptstyle PX(a',b') & \scriptstyle PZ(f^2(c'),f^2 (d')) \\};
  \path[-stealth]
    (m-1-1) edge node [left] {$\scriptstyle PX(\alpha',\beta')$} (m-2-1)
(m-1-2) edge node [below] {$\scriptstyle F^{21}_{a,b}$} (m-1-1)
(m-2-2) edge node [below] {$\scriptstyle F^{21}_{a',b'}$} (m-2-1)
    (m-1-2) edge node [dashed,right] {$\scriptstyle PZ(\gamma,\delta)$} (m-2-2);
\end{tikzpicture}
\end{center}
}

\comment{
For all $(\gamma,\delta) \in {\cal P}Y$ there exists
$(\alpha,\beta) \in {\cal P}X$ 
such that the following diagrams commute up 
to homotopy 
\begin{center}
\begin{minipage}{.45\linewidth}
  \begin{tikzpicture}[scale=1]
\matrix (m) [matrix of math nodes,row sep=3em,column sep=4em,minimum width=2em]
  {
     \scriptstyle PX(a,b) & \scriptstyle PY(f(a),f(b)) \\
     \scriptstyle PX(a',b') & \scriptstyle PY^f(u',v') \\};
  \path[-stealth]
    (m-1-1) edge node [left] {$\scriptstyle PX(\alpha,\beta)$} (m-2-1)
            edge [out=10,in=170] node [above] {${\scriptstyle Pf_{a,b}}$} (m-1-2)
(m-1-2) edge [out=-170,in=-10] node [below] {$\scriptstyle F_{a,b}$} (m-1-1)
(m-2-1) edge [out=10,in=170] node [above] {${\scriptstyle Pf_{a',b'}}$} (m-2-2)
(m-2-2) edge [out=-170,in=-10] node [below] {$\scriptstyle F_{a',b'}$} (m-2-1)
    (m-1-2) edge node [dashed,right] {$\scriptstyle PX(\gamma,\delta)$} (m-2-2);
\end{tikzpicture}
\end{minipage}
\begin{minipage}{.45\linewidth}
  \hfill
  \begin{tikzpicture}[scale=1]
\matrix (m) [matrix of math nodes,row sep=3em,column sep=4em,minimum width=2em]
  {
     \scriptstyle PX(g(c),g(d)) & \scriptstyle PY(c,d) \\
     \scriptstyle PX(g(c'),g(d')) & \scriptstyle PY(c',d') \\};
  \path[-stealth]
    (m-1-1) edge node [left] {$\scriptstyle PX(\alpha,\beta)$} (m-2-1)
            edge [out=10,in=170] node [above] {${\scriptstyle G_{c,d}}$} (m-1-2)
(m-1-2) edge [out=-170,in=-10] node [below] {$\scriptstyle Pg_{c,d}$} (m-1-1)
(m-2-1) edge [out=10,in=170] node [above] {${\scriptstyle G_{c',d'}}$} (m-2-2)
(m-2-2) edge [out=-170,in=-10] node [below] {$\scriptstyle Pg_{c',d'}$} (m-2-1)
    (m-1-2) edge node [dashed,right] {$\scriptstyle PX(\gamma,\delta)$} (m-2-2);
\end{tikzpicture}
\end{minipage}
\end{center}
with $f(a')=u'$ and $f(b')=v'$.
}
\end{proof}

\paragraph{Remark : }
Similarly, if we have a surjective $f^2 \ : \ Y \rightarrow Z$ that 
is a dihomotopy equivalence, 
and $f^1 \ : \ X \rightarrow Y$ such that $f^2 \circ f^1$ is a surjective
dihomotopy
equivalence, first, there is no reason why $f^1$ should be surjective. 
Similarly as with $g^2$ before, we can prove that $Pf^1_{a,b}$
forms a continuous bigraded family of homotopy equivalences with a continuous
bigraded family of homotopy inverses. The problem is with $Pg^1$ which we
can only prove to have a continuous family of homotopy inverses on 
spaces $PY(g^2(e),g^2(f))$. The only result we can have in general is dual
to the one of Proposition \ref{bit2}. 
If $f^2 \ : \ X \rightarrow Y$ 
and $f^2 \ : \ Y \rightarrow Z$ are such that
$f^2$ and $f^2 \circ f^1$ are dihomotopy equivalences with surjective
homotopy inverses, 
then $f^2$ is a dihomotopy equivalence with surjective homotopy inverse.

\vskip .2cm




\comment{
\begin{proposition}
Dihomotopy equivalence has the following properties~: 
\begin{itemize}
\item If $f_1 \ : \ X \rightarrow Y$ and $f_2 \ : \ Y \rightarrow Z$ 
are dihomotopy equivalences, then $f_2 \circ f_1$ is a dihomotopy equivalence
\item If $f_2 \ : \ Y \rightarrow Z$ and $f_2 \circ f_1 \ : \ X \rightarrow Z$
are dihomotopy equivalences then $f_1 \ : \ X \rightarrow Y$ is a dihomotopy 
equivalence
\item If $f_1 \ : \ X \rightarrow Y$ is a surjective dihomotopy equivalence, and $f_2 \circ f_1 \ : \
X \rightarrow Z$ are dihomotopy equivalences, then $f_2 \ : \ Y \rightarrow Z$
is a dihomotopy equivalence
\end{itemize}
Finally, surjective dihomotopy equivalences have the 2-out-of-3 property. 
\end{proposition}
}

\comment{\begin{proof}
Consider $X$, $Y$ and $Z$ three directed spaces. 

First, suppose we have $f_1 \ : \ X \rightarrow Y$ and $f_2 \ : \ Y \rightarrow Z$ dmaps
from these respective directed spaces. 

Suppose first that $f_1$ and $f_2$ are dihomotopy equivalences. 
We have quadruples 
$(f_1,g_1,F_1,G_1)$, 
$(f_2,g_2,F_2,G_2)$ as in Definition \ref{def:dihomotopyequiv}. 
Now, it is obvious to see that
its composite $(f_2 \circ f_1, g_1 \circ g_2,F_1 \circ F_2,G_2 \circ G_1)$ is a dihomotopy
equivalence from $X$ to $Z$. 


Now suppose that $f_2$ is a surjective dihomotopy equivalence and that $f_2 \circ f_1$ is a
dihomotopy equivalence. Then we know by surjectivity that all
the $PY(c,d)$ are mapped onto $PZ(f_2(c),f_2(d))$ and by the 
2-out-of-3 property of classical
homotopy equivalences that $Pf_2$ induces a homotopy equivalence. 
For $g_2$, we directly have that it induces a homotopy equivalence $Pg_2$
by the 2-out-of-3 property. 

Finally, when considering only surjectivity has the 2-out-of-3 property, and this
fact together with what we just proved shows that surjective dihomotopy equivalences
have the 2-out-of-3 property. 
\end{proof}
}

\comment{
\paragraph{Remark : }
Note once again the similarity with Dwyer-Kan equivalence where we ask for a functor
to be a weak-equivalence to be essentially surjective (i.e. surjective on objects
up to isomorphism).
}

\vskip .2cm

\begin{example}
The unit segment is dicontractible. The wedge of two segments is dicontractible (which makes shows the version of dicontractibility discussed here to be notably different from that used in the framework of \cite{CSL16}). 
Note that in view of applications to directed topological complexity, this is coherent with
the fact that directed topological complexity should be invariant under directed homotopy equivalence. 
For any two pair of points in $\Gamma$ of directed segment or of a wedge of two segments, there is indeed
a continuous map depending on this pair of points to the unique dipath going from one to the other. 
\end{example}

\begin{example}
The Swiss flag is not directed homotopy equivalent to the hollow square.
This can be seen already using natural homology, that distinguishs the two, see
e.g. \cite{naturalhomology}. 

More precisely, we consider
the following d-spaces (SF on the left, HS on the right), coming from PV processes, which 
are subspaces of $\mathbb{R}^2$ and whose points are within 
the white part in the square (the grey part represents the forbidden states of the program) and whose dipaths are non decreasing paths for the componentwise ordering on $\mathbb{R}^2$. They are homotopy equivalent using the two maps (and even dmaps), $f$ from
$SF$ to $HS$ and $g$ from $HS$ to $SF$,  
depicted below ($f$ is the map on the left)~: 

\begin{center}
\begin{tikzpicture}[auto,scale = 0.45]
\draw (0,0) rectangle (5,5);
\draw [dotted] (1,0) -- (1,5);
\draw[dotted] (2,0) -- (2,5);
\draw[dotted] (3,0) -- (3,5);
\draw[dotted] (4,0) -- (4,5);
\draw [dotted] (0,1) -- (5,1);
\draw[dotted] (0,2) -- (5,2);
\draw[dotted] (0,3) -- (5,3);
\draw[dotted] (0,4) -- (5,4);
\draw [fill = gray!50,draw = gray!50] (1,2) rectangle (4,3);
\draw [fill = gray!50,draw = gray!50] (2,1) rectangle (3,4);
\draw[->,very thick] (5.2,2.5) -- (6.8,2.5);
\draw (7,0) rectangle (12,5);
\draw [dotted] (7.75,0) -- (7.75,5);
\draw[dotted] (8.5,0) -- (8.5,5);
\draw[dotted] (10.5,0) -- (10.5,5);
\draw[dotted] (11.25,0) -- (11.25,5);
\draw [dotted] (7,0.75) -- (12,0.75);
\draw[dotted] (7,1.5) -- (12,1.5);
\draw[dotted] (7,3.5) -- (12,3.5);
\draw[dotted] (7,4.25) -- (12,4.25);
\draw[fill = gray!20,draw=gray!20] (7.75,1.5) rectangle (11.25,3.5);
\draw[fill = gray!20,draw=gray!20] (8.5,0.75) rectangle (10.5,4.25);
\draw [fill = gray!50,draw = gray!50] (8.5,1.5) rectangle (10.5,3.5);
\end{tikzpicture}
\quad\quad\quad
\begin{tikzpicture}[auto,scale = 0.45]
\draw (0,0) rectangle (5,5);
\draw [dotted] (3.5,0) -- (3.5,5);
\draw[dotted] (1.5,0) -- (1.5,5);
\draw [dotted] (0,1.5) -- (5,1.5);
\draw[dotted] (0,3.5) -- (5,3.5);
\draw [fill = gray!50,draw = gray!50] (1.5,1.5) rectangle (3.5,3.5);
\draw[->,very thick] (5.2,2.5) -- (6.8,2.5);
\draw (7,0) rectangle (12,5);
\draw [dotted] (11,0) -- (11,5);
\draw[dotted] (8,0) -- (8,5);
\draw [dotted] (7,1) -- (12,1);
\draw[dotted] (7,4) -- (12,4);
\draw [fill = gray!20,draw = gray!20] (8,1) rectangle (11,4);
\draw[fill = gray!50,draw=gray!50] (8,2) rectangle (11,3);
\draw[fill = gray!50,draw=gray!50] (9,1) rectangle (10,4);
\end{tikzpicture}
\end{center}

The points in light grey are the points which do not belong to the image of those maps. The problem is that those two programs are quite different: $SF$ has a dead-lock in $\alpha$ and inaccessible states, 
while $HC$ does not. Topologically, they do not have the same (directed) components in the sense of \cite{APCSII}.

It is easy to see that although $Pf$ and $Pg$ are homotopy equivalences as well, 
$(f,g)$ does not induce a dihomotopy equivalence in our sense. Take a point
$\alpha$ in the lower convexity of the Swiss flag, and consider the constant dipath
on $f(\alpha)$ in $HS$. $Pf$ maps the constant path on $\alpha$ onto the constant
path on $f(\alpha)$ but we can extend in $P(HS)^f$ this constant path to paths
$v$ from $f(\alpha)$ to the image by $f$ of the upper right point, which is again
the upper right point in $HS$. This extension makes the corresponding path
space within $HS$ homotopy equivalent to two points, whereas there are no path
from $\alpha$ to the right upper point in $SF$. 

In natural homology, $SF$ and $HS$ do not have bisimilar natural homologies since, 
considering the pair of points $(\alpha,\alpha)$ in $SF \times SF$, all extensions
of this pair of points will give 0th homology of there corresponding path space 
equal to $\Z$, whereas there
are extensions of any pair of points $(\beta,\beta)$ in $HS$ which give 0th homology
of there corresponding path space equivalent to $\Z^2$. 
\end{example}

\begin{example}
\begin{figure}[h]
\centering
	\quad
\begin{tikzpicture}[auto,scale = 0.65]
		\node (0) at (-0.05,-0.3) {$\footnotesize{\textbf{0}}$};
		\node (1) at (0.75,3.9) {$\footnotesize{\textbf{1}}$};
		\node (a) at (0.65,2.3) {$\footnotesize{\alpha}$};
		\coordinate (A) at (0,0);
		\coordinate (B) at (2,0.9);
		\coordinate (C) at (-1.3,1.8);
		\coordinate (D) at (0.7,2.6);
		\coordinate (A') at (0,1);
		\coordinate (B') at (2,1.9);
		\coordinate (C') at (-1.3,2.8);
		\coordinate (D') at (0.7,3.6);
		\draw (B) -- (B');
		\draw (A') -- (B');
		\draw (C) -- (C');
		\draw (A') -- (C');
		\draw (B') -- (D');
		\draw (C') -- (D');
		\draw [fill = gray,draw = black,opacity = 0.3] (A) -- (B) -- (B') -- (A') -- (A);
		\draw [fill = gray,draw = black,opacity = 0.3] (A) -- (C) -- (C') -- (A') -- (A);
		\draw [fill = gray,draw = black,opacity = 0.3] (A') -- (B') -- (D') -- (C') -- (A');
		\draw(A) -- (B);
		\draw (A) -- (C);
		\draw (B) -- (D);
		\draw (C) -- (D);
		\draw (D) -- (D');
		\draw (A) -- (A');
		\draw[->, very thick] (0.7,2.6) -- (0.7,3.6);
		\node (gamma) at (0.9,2.95) {$\footnotesize{\gamma}$};
		\end{tikzpicture}
		\quad\quad
\begin{tikzpicture}[auto,scale = 0.65]
		\node (0) at (-0.5,-0.3) {$\textcolor{white}{\footnotesize{\textbf{0}}}$};
		\node (1) at (0.75,3.9) {$\textcolor{white}{\footnotesize{\textbf{1}}}$};
		\coordinate (A'') at (0,0);
		\coordinate (B'') at (2,0.9);
		\coordinate (C'') at (-1.3,1.8);
		\coordinate (D'') at (0.7,2.6);
		\coordinate (A) at (0,0.5);
		\coordinate (B) at (2,1.4);
		\coordinate (C) at (-1.3,2.3);
		\coordinate (D) at (0.7,3.1);
		\coordinate (A') at (0,1);
		\coordinate (B') at (2,1.9);
		\coordinate (C') at (-1.3,2.8);
		\coordinate (D') at (0.7,3.6);
		\draw (B) -- (B');
		\draw (A') -- (B');
		\draw (C) -- (C');
		\draw (A') -- (C');
		\draw (B') -- (D');
		\draw (C') -- (D');
		\draw [fill = gray,draw = black,opacity = 0.3] (A) -- (B) -- (B') -- (A') -- (A);
		\draw [fill = gray,draw = black,opacity = 0.3] (A) -- (C) -- (C') -- (A') -- (A);
		\draw [fill = gray,draw = black,opacity = 0.3] (A') -- (B') -- (D') -- (C') -- (A');
		\draw(A) -- (B);
		\draw (A) -- (C);
		\draw (B) -- (D);
		\draw (C) -- (D);
		\draw (D) -- (D');
		\draw (A) -- (A');
		\draw[<-, thick] (B) -- (B'');
		\draw[dotted] (A'') -- (B'');
		\draw[<-, thick] (C) -- (C'');
		\draw[dotted] (A'') -- (C'');
		\draw[dotted] (B'') -- (D'');
		\draw[dotted] (C'') -- (D'');
		\draw [fill = gray,draw = black,opacity = 0.1] (A'') -- (B'') -- (B) -- (A) -- (A'');
		\draw [fill = gray,draw = black,opacity = 0.1] (A'') -- (C'') -- (C) -- (A) -- (A'');
		\draw[<-, thick] (D) -- (D'');
		\draw[<-, thick] (A) -- (A'');
		\draw[->,thick] (-0.65,0.9) -- (-0.65,1.4);
		\draw[->,thick] (1,0.45) -- (1,0.95);
		\draw[->,thick] (-0.3,2.2) -- (-0.3,2.7);
		\draw[->,thick] (1.35,1.75) -- (1.35,2.25);
		\end{tikzpicture}
		\quad\quad
\begin{tikzpicture}[auto,scale = 0.65]
		\node (0) at (0,-0.3) {$\textcolor{white}{\footnotesize{\textbf{0}}}$};
		\node (1) at (0.75,3.9) {$\textcolor{white}{\footnotesize{\textbf{1}}}$};
		\coordinate (A) at (0,0);
		\coordinate (B) at (2,0.9);
		\coordinate (C) at (-1.3,1.8);
		\coordinate (D) at (0.7,2.6);
		\coordinate (A') at (0,1);
		\coordinate (B') at (2,1.9);
		\coordinate (C') at (-1.3,2.8);
		\coordinate (D') at (0.7,3.6);
		\draw[->,thick] (B) -- (B');
		\draw (A') -- (B');
		\draw[->,thick] (C) -- (C');
		\draw (A') -- (C');
		\draw (B') -- (D');
		\draw (C') -- (D');
		\draw [fill = gray,draw = black,opacity = 0.1] (A) -- (B) -- (B') -- (A') -- (A);
		\draw [fill = gray,draw = black,opacity = 0.1] (A) -- (C) -- (C') -- (A') -- (A);
		\draw [fill = gray,draw = black,opacity = 0.3] (A') -- (B') -- (D') -- (C') -- (A');
		\draw[dotted](A) -- (B);
		\draw[dotted] (A) -- (C);
		\draw[dotted] (B) -- (D);
		\draw[dotted] (C) -- (D);
		\draw[->,thick] (D) -- (D');
		\draw[->,thick] (A) -- (A');
		\draw[->,thick] (-0.65,0.9) -- (-0.65,1.9);
		\draw[->,thick] (1,0.45) -- (1,1.45);
		\draw[->,thick] (-0.3,2.2) -- (-0.3,3.2);
		\draw[->,thick] (1.35,1.75) -- (1.35,2.75);
		\end{tikzpicture}
\caption{Naive equivalence between the Fahrenberg's matchbox $M$ and its upper face $T$}
\label{matchdef}
\end{figure}
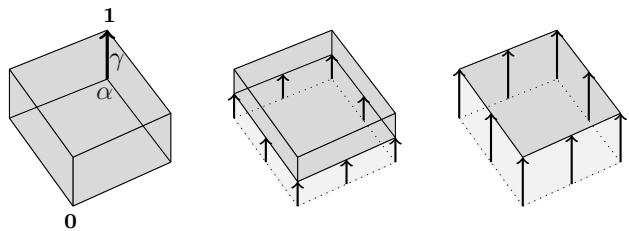
Consider Figure \ref{matchdef} : 
this depicts a naive dihomotopy equivalence between $M$ and to its upper face (so to a point). 
More precisely, the dmap $f$, which maps any point of $M$ to the point of $T$ just above of it, is a naive dihomotopy equivalence, whose inverse modulo dihomotopy is the embedding $g$ of $T$ into $M$. Hence, $f\circ g = id_T$ and a dihomotopy from $id_M$ to $g\circ f$ is depicted in Figure \ref{matchdef}.


Note that this naive dihomotopy equivalence, $(f,g)$, 
does not induce a dihomotopy equivalence in our sense. As a matter of fact, 
consider points $0$ and $\alpha$~: $\pth{X}(0,\alpha)$ is homotopy equivalent
to two points whereas $\pth{X}(f(0),f(\alpha))$ is homotopy equivalent to a point. 
\end{example}

\section{Dicontractibility and the dipath space map}

\begin{definition}
\label{def:weakdicontractible}
Let $X$ be a d-space. $X$ is said to be weakly dicontractible if its natural homotopy
(equivalently, natural homology \cite{naturalhomology}, by directed Hurewicz, 
all up to bisimulation) 
is the natural system - which we will denote by $\one$
- on 1 (the
final object in $Cat$) with value $\Z$ on the object, in dimension 0 and 0 
in higher dimensions. 
\end{definition}


The following is a direct consequence of Lemma \ref{lemma:bisim} but we give below a simple and
direct proof of it~: 

\begin{lemma}
Let $X$ be a dicontractible d-space.
Then
$X$ is weakly dicontractible in the sense of Definition \ref{def:weakdicontractible}. 
\end{lemma}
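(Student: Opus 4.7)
The plan is to show that the hypothesis of dicontractibility directly forces each path space $PX(c,d)$ (for $(c,d) \in \Gamma_X$) to be contractible, and then to exhibit an explicit bisimulation between the natural homotopy system of $X$ and the trivial natural system $\one$.

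First, I would unwind the hypothesis. Definition \ref{def:dicontractible} provides a continuously bigraded map $R : \{*\} \to PX$ such that each $R_{c,d} : \{*\} \to PX(c,d)$ is a homotopy equivalence. This is exactly the assertion that $PX(c,d)$ is contractible for every $(c,d) \in \Gamma_X$. Consequently $\pi_0(PX(c,d))$ is a single point (giving $\Z$ in the natural homotopy convention) and $\pi_n(PX(c,d)) = 0$ for all $n \geq 1$. By directed Hurewicz, as noted in Definition \ref{def:weakdicontractible}, it suffices to argue at the level of natural homotopy.

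Next, I would define the bisimulation $\mathcal{R}$ between the natural homotopy of $X$, viewed as a functor on $\PP{X}$, and $\one$, the natural system on the terminal category $1$. Concretely, $\mathcal{R}$ pairs every object $(c,d) \in \Gamma_X$ with the unique object $*$ of $1$, together with the unique induced isomorphism identifying $\pi_0(PX(c,d)) \cong \Z$ with the value of $\one$ at $*$ in dimension zero, and the zero map in all higher dimensions. Because $R$ is continuously bigraded, these identifications vary continuously in $(c,d)$.

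The key verification is that $\mathcal{R}$ is hereditary with respect to extension morphisms. Given any extension $(\alpha,\beta) : (c,d) \to (c',d')$ in $\PP{X}$, the induced map $PX(\alpha,\beta) : PX(c,d) \to PX(c',d')$ is a continuous map between contractible spaces; it therefore induces the identity on $\pi_0 \cong \Z$ and the zero map on $\pi_n$ for $n \geq 1$. This matches precisely the unique identity extension in $1$, so the bisimulation square commutes on the nose. The main obstacle, such as it is, lies not in any hard computation but in matching the notational conventions for the ``bisimulation of natural systems'' used in \cite{CSL16,naturalhomology}; once these are fixed, the above construction is essentially forced by the contractibility of the fibers.
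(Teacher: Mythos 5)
Your proof is correct and follows essentially the same route as the paper's: you deduce that all path spaces $PX(c,d)$ are contractible from the bigraded homotopy equivalences $R_{c,d}$, observe that extension maps then act trivially (identity on $\pi_0\cong\Z$, zero in higher degrees), and conclude by exhibiting the evident bisimulation with the one-object trivial system $\one$. The only difference is that you spell out the bisimulation explicitly where the paper leaves it as ``a simple exercise''; no gap either way.
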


\begin{proof}
Suppose $X$ is dicontractible.
Therefore, by Definition \ref{def:dicontractible}, we have a continuous map $R \ : \{*\} \rightarrow PX$, 
continuously bigraded, 
which are are homotopy equivalences. 
\comment{
These homotopy equivalences are natural in the following sense. 
For each $(\gamma,\delta)$ morphism from $(c,d)$ to $(c',d')$ in $\pth{X}$, 
the following diagrams commute up to (classical) homotopy~: 

\begin{center}
  \begin{tikzpicture}[scale=1]
    \node (Fx') at (1,0) {$\scriptstyle \{*\}$};
    \node (Fx) at (1,1.5) {$\scriptstyle \{*\}$};
    \node (Gy') at (3,0) {$\scriptstyle \pth{X}(c',d')$};
    \node (Gy) at (3,1.5) {$\scriptstyle \pth{X}(c,d)$};
\node (H) at (2,.75) {$\scriptscriptstyle \sim$};
    \draw[->] (Fx) -- (Fx');
    \draw[->] (Fx) -- node [above] {$\scriptscriptstyle R_{c,d}$} (Gy);
    \draw[->] (Fx') -- node [below] {$\scriptscriptstyle R_{c',d'}'$} (Gy');
    \draw[->] (Gy) -- (Gy');
    \node (Fi) at (0.7,0.75) {$\scriptscriptstyle Id$};
    \node (Gj) at (3.3,0.75) {$\scriptscriptstyle PX(\gamma,\delta)$};
  \end{tikzpicture}
\end{center}

This shows that all the $\pth{X}(a,b)$ are contractible and 
}
All extension maps 
$(\gamma,\delta)$ induce identities modulo homotopy, trivially. 
Now, these diagrams of spaces induce, in homology, a diagram which has $\Z$
as value on objects for dimension 0, and 0 for higher dimensions. It is a simple
exercise to see that such diagrams are bisimilar to the one point diagram
which has only $\Z$ as value in dimension 0, and 0 in higher dimension.
This shows weak dicontractibility. 
\end{proof}





\begin{theorem}
Suppose $X$ is a contractible d-space. 
Then, the 
dipath space map has a continuous section if and only if $X$ is dicontractible. 
\end{theorem}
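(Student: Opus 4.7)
The proof has two implications and the essential content is in the forward one.

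For $(\Leftarrow)$: if $X$ is dicontractible, Definition \ref{def:dicontractible} provides a continuously bigraded $R \colon \{*\} \to PX$, and the underlying continuous assignment $s(c,d) := R_{c,d}(*) \in PX(c,d)$ is a continuous section of $\chi$ by construction.

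For $(\Rightarrow)$: given a continuous section $s \colon \Gamma_X \to PX$ of $\chi$, I set $R_{c,d}(*) := s(c,d)$; continuity of $s$ delivers the continuous bigrading. It remains to verify that each $R_{c,d}$ is a homotopy equivalence, equivalently that every fiber $PX(c,d)$ is contractible to $s(c,d)$. My plan is to build this contraction explicitly, using $s$ to fill in between an arbitrary dipath $p \in PX(c,d)$ and the distinguished dipath $s(c,d)$ in three consecutive stages glued along matching values. The central stage is the sliding homotopy
\[
H(p,t)(u) = \begin{cases} p\bigl(2u(1-t)\bigr) & 0 \le u \le 1/2, \\ s\bigl(p(1-t), d\bigr)(2u-1) & 1/2 \le u \le 1, \end{cases}
\]
which concatenates the initial segment $p|_{[0,1-t]}$ with the section-chosen dipath $s(p(1-t),d)$; at $t=0$ it equals $p * s(d,d)$ and at $t=1$ it equals the delayed dipath that stays at $c$ on $[0,1/2]$ and then traces $s(c,d)$ on $[1/2,1]$. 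I then flank $H$ by a tail-attachment deformation from $p$ to $p * s(d,d)$ on one side, and on the other a straight-line reparametrization deformation from the delayed dipath back to $s(c,d)$, using the convex combination $\phi_r(u) = (1-r)\max(0,2u-1) + r u$ of two non-decreasing reparametrizations.

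The main point to check will be joint continuity in $(p,t)$ across the three stages, together with the fact that every intermediate value lies in $PX(c,d)$. Both are direct consequences of the d-space axioms (closure of $dX$ under non-decreasing reparametrization and concatenation), continuity of $s$, continuity of concatenation, and continuity of evaluation in the compact-open topology; in particular the matching values at the two gluing parameters hold on the nose by design. The contractibility hypothesis on $X$ is consistent with the classical characterization (topological complexity equal to one being equivalent to contractibility) and enters as a regularity assumption on the ambient topology, ensuring that the reparametrizations used in the third stage behave as expected. Once the contraction of each $PX(c,d)$ onto $\{s(c,d)\}$ is assembled, every $R_{c,d}$ is a homotopy equivalence and $X$ is dicontractible.
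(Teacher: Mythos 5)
Your overall strategy is the paper's: the converse direction is the same one-liner, and for the forward direction both you and the paper use the section $s$ to build an explicit, continuously bigraded contraction of each $PX(c,d)$ onto $\{s(c,d)\}$. The paper does it with a single symmetric formula $H(u,t)$ that keeps the two ends $u|_{[0,t/2]}$ and $u|_{[1-t/2,1]}$ and fills the middle with a full traversal of $s(u(t/2),u(1-t/2))$, so that $t=0$ gives $s(u(0),u(1))$ on the nose and $t=1$ is declared equal to $u$ ``by continuity''; you replace this by a one-sided shrinking in three glued stages.

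The genuine gap is your stage 1, the ``tail-attachment deformation from $p$ to $p*s(d,d)$''. You treat it as routine, but $s(d,d)$ is whatever dipath from $d$ to $d$ the section happens to pick; nothing forces it to be constant. The obvious candidate (traverse all of $p$ on $[0,1-r/2]$ and all of $s(d,d)$ on $[1-r/2,1]$) is not continuous at $r=0$ in the compact-open topology: for every $r>0$ the path sweeps the entire, possibly non-degenerate, image of $s(d,d)$ at times arbitrarily close to $1$, so it cannot converge to $p$ unless $s(d,d)$ is constant. Worse, since your stages 2 and 3 already show that the continuous self-map $p\mapsto p*s(d,d)$ of $PX(c,d)$ is null-homotopic, stage 1 (that is, $\mathrm{id}\simeq(-*s(d,d))$) is essentially equivalent to the contractibility you are trying to establish, not a verification you can defer. (To be fair, the paper's own extension of $H(u,t)$ to $t=1$ quietly faces the same degeneration, the middle piece being a full traversal of $s(u(1/2),u(1/2))$ squeezed into a vanishing interval; but in your write-up the missing step is an entire stage rather than a limit.) Separately, your remark that the contractibility of $X$ acts as a ``regularity assumption on the ambient topology'' for stage 3 is off the mark: in the paper it only supplies the classical homotopy equivalence $f:X\to\{a_0\}$, $g:\{a_0\}\to X$ underlying dicontractibility, and it plays no role in the path-space contraction.
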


\begin{proof}
As $X$ is contractible, we have $f: X \rightarrow \{a_0\}$ (the constant map) and $g: \{a_0\} \rightarrow X$ (the inclusion)
which form a (classical) homotopy equivalence. Trivially, $f$ and $g$ are dmaps.

Suppose that we have a continuous section $s$ of $\chi$. 
There is an obvious inclusion map $i~: \{s(a,b)\} \rightarrow \pth{X}(a,b)$, which is 
continuously bigraded in $a$ and $b$. Define $R$ to be that map. 
Now the constant map $r~: \pth{X}(a,b) \rightarrow \{s(a,b)\}$ is a retraction map for $i$. 
We define 
$$ \begin{array}{lccc}
H \ : & \pth{X}\times [0,1] & \rightarrow & \pth{X} \\
& (u,t) & \rightarrow & \mbox{$v$ s.t. $\left\{\begin{array}{rcll}
v(x) & = & u(x) & \mbox{if $0\leq x \leq \frac{t}{2}$} \\
v(x) & = & s\left(u\left(\frac{t}{2}\right),u\left(1-\frac{t}{2}\right)\right)\left(\frac{x-\frac{t}{2}}{1-t}\right) & \mbox{if $\frac{t}{2} \leq x \leq 1-\frac{t}{2}$}\\
v(x) & = & u(x) & \mbox{if $1-\frac{t}{2}\leq x \leq 1$} \\
\end{array}\right.$}\\
\end{array}
$$
\noindent ($H(u,t)$ is extended by continuity for $t=1$ as being equal to $u$)
\comment{
$$ \begin{array}{lccc}
H_{a,b} \ : & \pth{X}(a,b)\times [0,1] & \rightarrow & \pth{X}(a,b) \\
& (u,t) & \rightarrow & u_{|\geq t} * s(a,u(t)) \\ 
\end{array}
$$
}

As concatenation and evaluation are continuous and as $s$ is continuous in both arguments
$H$ is continuous in 
$u \in PX$ and in $t$. $H$ induces families $H_{a,b} \ :  \pth{X}(a,b) \times [0,1] \rightarrow \pth{X}(a,b)$, and because $H$ is continuous in $u$ in the compact-open
topology, this family $H_{a,b}$ is continuous in $a$ and $b$ in $X$. 
Finally, we note that 
$H(u,1)=u$ and $H(u,0)=s(u(0),u(1))=i\circ r(u)$. Hence $r$ is a deformation retraction and
$PX(a,b)$ is homotopy equivalent to $\{s(a,b)\}$ and has the homotopy type we expect
(is contractible for all $a$ and $b$), meaning that $R$ is a continously bigraded homotopy equivalence.

The homotopy $H_{a,b}$ shows also that any extension map from $\pth{X}(a',b')$ to 
$\pth{X}(a,b)$ is homotopic
to the identity\footnote{Note the link with
the fact that $s(a,a')$ and $s(b',b)$ are Yoneda invertible.}. 
Therefore, $X$ is dicontractible.   

Conversely, suppose $X$ is dicontractible. 
We have in particular a continuous family (in $a$, $b$ in $X$)
of maps $R_{a,b} \ : \ \{ * \} \rightarrow \pth{X}(a,b)$. Define 
$s(a,b)=R_{a,b}(*)$, this is a continuous section of $\chi$. 
\end{proof}

\paragraph{Remark : }
Sometimes, we do not know right away, in the theorem above, that $X$ is contractible. But instead, 
there is  an initial state in $X$, i.e. a state $a_0$
from which 
every point of $X$ is reachable. 
Suppose then that, as in the Theorem above, $\chi$ has a continuous section $s : \Gamma_X \rightarrow PX$. 
Consider $s'(a,b)=s^{-1}(a_0,a)*s(a_0,b)$ the concatenation of the inverse dipath,
going from $a$ to $a_0$, with the dipath going from $a_0$ to $b$~: this is a continuous
path from $a$ to $b$ for all $a$, $b$ in $X$. Now, $s'$ is obviously continuous since
concatenation, and $s$, are. By a classical theorem \cite{farber}, this implies that
$X$ is contractible and the rest of the theorem holds. 

\vskip .2cm

\section{Directed topological complexity}

\begin{definition}
The directed topological complexity $\diTC{X}$ 
of a d-space $X$ (which is also an Euclidean Neighborhood retracts, or ENR)
is the minimum number $n$ (or $\infty$
if no such $n$ exists) such
that $\Gamma_X$ can be partitioned into $n$ ENRs
\footnote{This could be
replaced as in the classical case, by asking for a covering by open sets, or by 
a covering by closed sets.}
$F_1,\ldots,F_n$
such that there exists a map $s : \Gamma_X \rightarrow PX$ (not necessarily continuous, of
course!) with~: 
\begin{itemize}
\item $\chi \circ s = Id$ ($s$ is a, non-necessarily continuous, section of $\chi$)
\item $s_{|F_i} : F_i \rightarrow PX$ is continuous
\end{itemize}
\end{definition}


\begin{example}
Consider the PV program $Pa Va \mid Pa Va$ ($a$ is a mutex), the component
category (see e.g. \cite{APCSII}) has 4 regions $C_1$, $C_2$, $C_3$ and $C_4$, with a unique morphism from 
$C_1$ to $C_2$, $C_1$ to $C_3$, $C_2$ to $C_4$ and $C_3$ to $C_4$ (but two from $C_1$
to $C_4$). 
We have $\diTC{X}=2$ since $\Gamma_X$ can be partitioned into 
$\{(x,y) \ | \ x \in C_i, y \in C_j, (i,j)\neq (1,4)\}$ and $C_1\times C_4$.
More generally speaking, the lifting property of components, Proposition
7 of \cite{APCSI}, in the case when
we have spaces $X$ with components categories (such as with the cubical complexes
of \cite{CSL16}) implies that we can examine the dihomotopy type of $X$ 
through the dihomotopy equivalent space, quotient of $X$ by its components. 
\end{example}

\begin{example}
Consider now $X$ to be 
a cube minus an inner cube, seen as a partially ordered space with
componentwise ordering, and as such, as a d-space. 
Note that it is dihomotopy equivalent to a hollow cube, hence we will examine
its directed topological complexity through this diretract. Up to homotopy, this is $S^2$ (seen as a sphere with unit radius centered in 0,
in $\R^3$), for which we know (e.g. \cite{farber}) that
$TC(X)=3$. A simple partition which shows that it is at most 3 is
\begin{itemize}
\item $F_1=\{(x,y) \ | \ x \neq -y\}$
\item $F_2=\{(x,-x) \ | \ x \neq x_0\}$
\item $F_3=\{(x_0,-x_0) \}$
\end{itemize}
where $x_0$ is some point of $S^2$ that we can choose, for which we have a 
smooth vector field $v$ on $S^2$, which is non zero everywhere except at $x_0$ (such 
a point must exist by general theorems, and we can find a vector field which will
only be zero at one point). 
On $F_1$, we take as section to the classical path space fibration, the Euclidean
geodesic path from $x$ to $y$. On $F_2$, we take the path from $x$ to $-x$
which follows the vector field $v$. Finally, we take any path from $x_0$ to $-x_0$. 

This partition can be used to find a upper bound for the directed topological 
complexity of the hollow cube (or equivalently, the cube minus an inner cube). 
Note that in general, the two notions are incomparable. Because if
$X\times X=\cup_{i=1}^k F_i$ ($k$ is $TC(X)$) with $s_{|F_i}$ continuous section of the path space
fibration, then we do not know if on each $F_i$ we can find a continuous
section of the dipath space map. Conversely, if $\Gamma_X=\cup_{i=1}^l F_i$
($l$ is $\diTC{X}$), each $s_{|F_i}$ provides us with a continuous section, locally
to $F_i$, of the dipath space map and of the path space fibration, but unfortunately,
we can only cover that way $\Gamma_X \subseteq X \times X$. These two notions
are only trivially comparable when the set of directed paths is equal to the 
set of all continuous paths, in which case the directed topological complexity
is equal to the classical topological complexity.

Still, in some cases, such as in the case of the hollow cube, we can carefully
examine the partition given by the classical topological complexity, to, in general,
find an upper bound to the directed topological complexity. 
Here, we can choose $x_0$ to be the final point of $X$ (the hollow cube), and
strip down $F_1$, $F_2$ and $F_3$ to be the part of the previous partition, intersected
with $\Gamma_X$. On each of these three sets we have a trivial section to the
dipath space map. Hence $\diTC{X}\leq 3$. 

Martin Raussen observed\footnote{Private communication during the Hausdorff
Institute ``Applied Computational Algebraic Topology'' semester, on the 14th
September 2017.} that in fact, $\diTC{X}=2$ which shows an essential difference
to the classical case. 
\end{example}


\paragraph{Remark : } 
Consider the universal covering of an NPC cubical complex (as in \cite{ardila,cat0}). 
It is CAT(0). We conjecture that $\diTC{X}$ is the number of maximal configurations of
the corresponding prime event structure (see \cite{chepoi1,chepoi2,winskel,ardila}, 
or equivalently, the number of levels in 
the universal dicovering (see e.g. \cite{dicoverings}), or the number of maximal dipaths starting in the initial
point, up to dihomotopy). 

\begin{lemma}
Let $X$ and $Y$ be two dihomotopy equivalent spaces. Then $\diTC{X}=\diTC{Y}$.
\end{lemma}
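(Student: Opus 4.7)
The plan is to transfer a witnessing partition across the dihomotopy equivalence $(f,g,F,G)$ in both directions. By symmetry, it is enough to show $\diTC{X} \leq \diTC{Y}$; applying the same argument to $(g,f,G,F)$ then yields $\diTC{Y} \leq \diTC{X}$.

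First I would fix $n = \diTC{Y}$ together with a witnessing partition $\Gamma_Y = F_1 \sqcup \cdots \sqcup F_n$ into ENRs and continuous sections $s_i \colon F_i \rightarrow PY$ of $\chi_Y$. The natural candidates on the $X$ side are $E_i = (f \times f)^{-1}(F_i) \cap \Gamma_X$. Because $f$ is a dmap, the image of any dipath from $a$ to $b$ in $X$ is a dipath from $f(a)$ to $f(b)$ in $Y$, so $(f \times f)(\Gamma_X) \subseteq \Gamma_Y$; hence the $E_i$ do form a partition of $\Gamma_X$. For the section, I would set
\[
t_i(a,b) \;=\; F_{a,b}\bigl(s_i(f(a),f(b))\bigr).
\]
By the bigrading, $F_{a,b}$ takes values in $PX(a,b)$, so $\chi_X \circ t_i = \mathrm{Id}_{E_i}$. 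Continuity of $t_i$ is immediate from the continuity of $f$, the continuity of $s_i$ on $F_i$, and the continuous bigrading of $F$, which by definition means that the assignment $(a,b,p) \mapsto F_{a,b}(p)$ is jointly continuous on the total space $PY^f$ restricted over $E_i$.

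The one delicate step is checking that each $E_i$ is itself an ENR, since in general the preimage of an ENR under a continuous map need not be an ENR. This is exactly the ``mild condition'' mentioned in the abstract: one assumes that the underlying dmaps $f$ and $g$ of the dihomotopy equivalence are maps of ENRs in a sense sufficient to guarantee that preimages of ENR strata are ENR (for instance, $f$ is PL or smooth, or more generally respects a stratification refining the given one). Under this hypothesis the $E_i$ are ENRs and the construction above exhibits a partition of $\Gamma_X$ of size $n$ with continuous sections.

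Therefore $\diTC{X} \leq n = \diTC{Y}$, and repeating the argument with the roles of $X$ and $Y$ swapped (using $(g,G)$ in place of $(f,F)$) gives the reverse inequality. The main obstacle, as flagged, is the ENR issue for the pulled back strata; everything else is a direct consequence of the bigraded continuity built into Definition~\ref{def:dihomotopyequiv} and the fact that dmaps preserve $\Gamma$.
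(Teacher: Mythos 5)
Your proof is correct and takes essentially the same route as the paper: there, too, the partition is pulled back along $f^*$ (resp.\ $g^*$) and the section is transported by the continuously bigraded homotopy inverses $F$ (resp.\ $G$), the only cosmetic difference being that the paper writes out both inequalities explicitly instead of invoking the symmetry of Definition~\ref{def:dihomotopyequiv}. Your caution about the pulled-back strata is warranted --- the paper simply asserts that the preimage of an ENR under a continuous map is an ENR, which really needs either an extra hypothesis as you suggest or the footnoted alternative definition via open (or closed) covers, where pullback is unproblematic.
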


\begin{proof}
As $X$ and $Y$ are dihomotopy equivalent, we have $f \ : \ X \rightarrow Y$ and 
$g \ : \ Y \rightarrow X$ dmaps, which form a homotopy equivalence between $X$ and $Y$. 
We also get $G_{c,d} : PX(g(c),g(d)) \rightarrow PY(c,d)$ which is inverse modulo homotopy
to $Pg_{c,d}$, and varies continuously according to $c$, $d$ ; and $F_{a,b} \ : \ PX(a,b)
\rightarrow PY(f(a),f(b))$ which is inverse modulo homotopy to $Pf_{a,b}$, varying continuously
according to $a$ and $b$. 
We note that $f$ and $g$ induce continuous maps $f^* \ : \ \Gamma_X \rightarrow \Gamma_Y$
and $g^* \ : \ \Gamma_Y \rightarrow \Gamma_X$ ($\Gamma_X$ and $\Gamma_Y$ inherit the product
topology of $X$, resp. $Y$). 

Suppose first $k=\diTC{X}$. Thus we can write $\Gamma_X=F^X_1 \cup \ldots \cup F^X_k$ such 
that we have a map $s \ : \ \Gamma_X \rightarrow PX$ with $\chi \circ s=Id$ and $s_{|F^X_i}$
is continuous. 

Define $F^Y_i=\{ u \in \Gamma_Y \ | \ g^*(u)\in F^X_i\}$ (which is an ENR - or open 
if we choose the alternate definition - as $F^X_i$
is ENR and $g^*$ is continuous) and define $t_{|F^Y_i}(u)=G_{u} \circ s_{|F^X_i}
\circ g^*(u)\in PY(u)$ for all $u \in F^Y_i \subseteq \Gamma_Y$. This is a continuous map in $u$ since
$s_{|F^X_i}$ is continuous, $g^*$ is continuous, and $G$ is continuously bigraded. Therefore
$\diTC{Y} \leq \diTC{X}$. 

Conversely, suppose $l:\diTC{Y}$, $\Gamma_Y=F^Y_1 \cup \ldots \cup F^Y_l$
such 
that we have a map $t \ : \ \Gamma_Y \rightarrow PY$ with $\chi \circ t=Id$ and $t_{|F^Y_i}$
is continuous. Now define
$F^X_i=\{ u \in \Gamma_X \ | \ f^*(u)\in F^Y_i\}$ (which is an ENR - or an open 
set if we choose the alternate definition - as $F^Y_i$ is ENR and $f^*$ is continous) 
and define $s_{|F^X_i}(u)=F_{u} \circ t_{|F^Y_i}
\circ f^*(u)\in PX(u)$ for all $u \in F^X_i \subseteq \Gamma_X$. This is a continuous map in $u$ since
$t_{|F^Y_i}$ is continuous, $f^*$ is continuous, and $F$ is continuously bigraded. Therefore
$\diTC{X} \leq \diTC{Y}$. Hence we conclude that $\diTC{X}=\diTC{Y}$ and directed topological
complexity is an invariant of dihomotopy equivalence.  
\end{proof}

\paragraph{Remark : }
The proof above is enlightening in that it uses all homotopy equivalences generated
by the data that $f$ is a dihomotopy equivalence. We would have had only a
Dwyer-Kan type of equivalence, we would not have had that directed topological
equivalence is a dihomotopy invariant.

\vskip .2cm

\section{Conclusion}

There are numerous developments to this, in studying directed topological complexity
with a view to control theory, but also on the more fundamental level of the
structure of dihomotopy equivalences. 

There is for instance an interesting notion of weak-equivalence, coming out of
our dihomotopy equivalence (same conditions, but inducing isomorphisms of the
fundamental groups of the different path spaces, with similar extension conditions). 
This weak-equivalence should have good properties with respect to our dihomotopy
equivalence, have the 2-out-of-6 property, and a refined form of natural homology
should come out as a derived functor in that framework. This is left for another
venue. 

\paragraph{Acknowledgments}
We thank the participants of the Hausdorff Institute Seminar Institute ``Applied Computational Algebraic Topology'' for useful comments on this work, and most particularly
Martin Raussen, Lisbeth Fajstrup and Samuel Mimram. We thank Tim Porter for his
careful reading of a previous version of this report, and his comments and ideas. 



\end{document}

\comment{\ForAuthors{The problem is that it is not in general a naive dihomotopy equivalence. See
the very end of Section 1 where we define the homotopy between $g \circ f$ and $Id_X$ from
$s$. So I suggest to drop the condition to be a naive dihomotopy equivalence - just ask to be 
a classical homotopy equivalence, hence we cannot consider $(f(\alpha),f(\beta))$ in 
the diagrams below, we have instead to come back to the formulation ``$\exists \gamma,
\delta$ such that the diagram commutes up to homotopy''?}
}

\comment{\begin{center}
\begin{minipage}{6.5cm}
$$\begin{array}{rclcl} 
F_{a,b} & : & \pth{X}(a,b) &
\rightarrow & \{*\} \\
G_{a,b} & : & \{*\} 
& \rightarrow &\pth{X}(a,b)\\
\end{array}$$
\end{minipage}
\begin{minipage}{6cm}
$$\begin{array}{rclcl} 
F'_{c,d} & : & \pth{X}(a_0,a_0)\sim \{*\} & \rightarrow & 
\{*\} \\
G'_{c,d} & : & \{*\} & \rightarrow & \pth{X}(a_0,a_0)\sim \{*\}
\end{array}$$
\end{minipage}
\end{center}

The last two maps are identities on the one point space, whereas the first two are
the homotopy equivalences given thanks to Lemma \ref{lemma}~: the only non-totally
trivial map is $G_{a,b}$ mapping $*$ onto $s(a,b)\in \pth{X}(a,b)$, hence
defines a continuous family of maps. Now we have to check
that the following diagrams are commutative up to homotopy~: 

\begin{center}
\begin{minipage}{.3\linewidth}
  \hfill
  \begin{tikzpicture}[scale=1]
    \node (Fx') at (1,0) {$\scriptstyle \pth{X}(a',b')$};
    \node (Fx) at (1,1.5) {$\scriptstyle \pth{X}(a,b)$};
    \node (Gy') at (3,0) {$\scriptstyle \{*\}$};
    \node (Gy) at (3,1.5) {$\scriptstyle \{*\}$};
\node (H) at (2,.75) {${\cal H}_{\alpha,\beta,\gamma,\delta}$};
    \draw[->] (Fx) -- (Fx');
    \draw[->] (Fx) -- node [above] {$\scriptscriptstyle F_{a,b}$} (Gy);
    \draw[->] (Fx') -- node [below] {$\scriptscriptstyle F_{a',b'}$} (Gy');
    \draw[->] (Gy) -- (Gy');
    \node (Fi) at (0.7,0.75) {$\scriptstyle \pth{X}(\alpha,\beta)$};
    \node (Gj) at (3.3,0.75) {$\scriptstyle Id$};
  \end{tikzpicture}
\end{minipage}
\begin{minipage}{.3\linewidth}
  \hfill
  \begin{tikzpicture}[scale=1]
    \node (Fx') at (1,0) {$\scriptstyle \pth{X}(a',b')$};
    \node (Fx) at (1,1.5) {$\scriptstyle \pth{X}(a,b)$};
    \node (Gy') at (3,0) {$\scriptstyle \{*\}$};
    \node (Gy) at (3,1.5) {$\scriptstyle \{*\}$};
\node (K) at (2,.75) {${\cal K}_{\alpha,\beta,\gamma,\delta}$};
    \draw[->] (Fx) -- (Fx');
    \draw[<-] (Fx) -- node [above] {$\scriptscriptstyle G_{a,b}$} (Gy);
    \draw[<-] (Fx') -- node [below] {$\scriptscriptstyle G_{a',b'}$} (Gy');
    \draw[->] (Gy) -- (Gy');
    \node (Fi) at (0.7,0.75) {$\scriptstyle \pth{X}(\alpha,\beta)$};
    \node (Gj) at (3.3,0.75) {$\scriptstyle Id$};
  \end{tikzpicture}
\end{minipage}
\end{center}

The only non-trivial diagram to check is the one on the right hand side, we should
check that for all $(a,b) \in \Gamma$, $(a',b')\in \Gamma$, for all dipaths
$\alpha$ from $a'$ to $a$ and for all dipaths $\beta$ from $b$ to $b'$, 
$s(a',b')$ is dihomotopic to $\alpha*s(a,b)*\beta$. This was already shown 
in Lemma \ref{lemma}. 

We have also to check the other two diagrams below~: 

\begin{center}
\begin{minipage}{.3\linewidth}
  \hfill
  \begin{tikzpicture}[scale=1]
    \node (Fx') at (1,0) {$\scriptstyle \pth{X}(a_0,a_0)\sim \{*\}$};
    \node (Fx) at (1,1.5) {$\scriptstyle \pth{X}(a_0,a_0)\sim \{*\}$};
    \node (Gy') at (3,0) {$\scriptstyle \{*\}$};
    \node (Gy) at (3,1.5) {$\scriptstyle \{*\}$};
\node (H) at (2,.75) {${\cal H}_{\alpha,\beta,\gamma,\delta}'$};
    \draw[->] (Fx) -- (Fx');
    \draw[->] (Fx) -- node [above] {$\scriptscriptstyle F_{c,d}'$} (Gy);
    \draw[->] (Fx') -- node [below] {$\scriptscriptstyle F_{c',d'}'$} (Gy');
    \draw[->] (Gy) -- (Gy');
    \node (Fi) at (0.7,0.75) {$\scriptstyle Id$};
    \node (Gj) at (3.3,0.75) {$\scriptstyle Id$};
  \end{tikzpicture}
\end{minipage}
\begin{minipage}{.3\linewidth}
  \hfill
  \begin{tikzpicture}[scale=1]
    \node (Fx') at (1,0) {$\scriptstyle \pth{X}(a_0,a_0)\sim \{*\}$};
    \node (Fx) at (1,1.5) {$\scriptstyle \pth{X}(a_0,a_0)\sim \{*\}$};
    \node (Gy') at (3,0) {$\scriptstyle \{*\}$};
    \node (Gy) at (3,1.5) {$\scriptstyle \{*\}$};
\node (K) at (2,.75) {${\cal K}_{\alpha,\beta,\gamma,\delta}'$};
    \draw[->] (Fx) -- (Fx');
    \draw[<-] (Fx) -- node [above] {$\scriptscriptstyle G_{c,d}'$} (Gy);
    \draw[<-] (Fx') -- node [below] {$\scriptscriptstyle G_{c',d'}'$} (Gy');
    \draw[->] (Gy) -- (Gy');
    \node (Fi) at (0.7,0.75) {$\scriptstyle Id$};
    \node (Gj) at (3.3,0.75) {$\scriptstyle Id$};
  \end{tikzpicture}
\end{minipage}
\end{center}
\noindent which are trivial. 

}



\comment{\begin{center}
\begin{minipage}{6.5cm}
$$\begin{array}{rclcl} 
Pf_{a,b} & : & \pth{X}(a,b) &
\rightarrow & \{*\} \\
Pg_{a,b} & : & \{*\} 
& \rightarrow & \pth{Y}(c,d)\sim \{*\}\\
\end{array}$$
\end{minipage}
\begin{minipage}{6cm}
$$\begin{array}{rclcl} 
F_{a,b} & : & \{*\} & \rightarrow & \pth{X}(a,b) \\
G_{c,d} & : & \pth{X}(a_0,a_0)\sim \{*\} & \rightarrow & 
\{*\} 
\end{array}$$
\end{minipage}
\end{center}
\noindent such that the only non trivial following commutative diagrams up to homotopy hold
(quantified over morphisms $(\alpha,\beta)$ and $(\gamma,\delta)$ respectively)~: 

\begin{center}
\begin{minipage}{.3\linewidth}
  \hfill
  \begin{tikzpicture}[scale=1]
    \node (Fx') at (1,0) {$\scriptstyle \pth{X}(a',b')$};
    \node (Fx) at (1,1.5) {$\scriptstyle \pth{X}(a,b)$};
    \node (Gy') at (3,0) {$\scriptstyle \pth{Y}(f(a'),f(b'))$};
    \node (Gy) at (3,1.5) {$\scriptstyle \pth{Y}(f(a),f(b))$};
\node (K) at (2,.75) {$\scriptscriptstyle \sim$};
    \draw[->] (Fx) -- (Fx');
    \draw[<-] (Fx) -- node [above] {$\scriptscriptstyle F_{a,b}$} (Gy);
    \draw[<-] (Fx') -- node [below] {$\scriptscriptstyle F_{a',b'}$} (Gy');
    \draw[->] (Gy) -- (Gy');
    \node (Fi) at (0.7,0.75) {$\scriptscriptstyle \pth{X}(\alpha,\beta)$};
    \node (Gj) at (3.3,0.75) {$\scriptscriptstyle \pth{Y}(f(\alpha),f(\beta))$};
  \end{tikzpicture}
\end{minipage}
\begin{minipage}{.3\linewidth}
  \hfill
  \begin{tikzpicture}[scale=1]
    \node (Fx') at (1,0) {$\scriptstyle \pth{X}(u',v')$};
    \node (Fx) at (1,1.5) {$\scriptstyle \pth{X}(u,v)$};
    \node (Gy') at (3,0) {$\scriptstyle \pth{Y}(f(a'),f(b'))$};
    \node (Gy) at (3,1.5) {$\scriptstyle \pth{Y}(f(a),f(b))$};
\node (H) at (2,.75) {$\scriptscriptstyle \sim$};
    \draw[->] (Fx) -- (Fx');
    \draw[<-] (Fx) -- node [above] {$\scriptscriptstyle F_{a,b}$} (Gy);
    \draw[<-] (Fx') -- node [below] {$\scriptscriptstyle F_{a',b'}$} (Gy');
    \draw[->] (Gy) -- (Gy');
    \node (Fi) at (0.7,0.75) {$\scriptscriptstyle \pth{X}(F_{f(a'),f(a)}(\gamma),F_{f(b),f(b')}(\delta))$};
    \node (Gj) at (3.3,0.75) {$\scriptscriptstyle \pth{Y}(\gamma,\delta)$};
  \end{tikzpicture}
\end{minipage}
\end{center}

We deduce from these diagrams the following ones~: 

\begin{center}
\begin{minipage}{.3\linewidth}
  \hfill
  \begin{tikzpicture}[scale=1]
    \node (Fx') at (1,0) {$\scriptstyle \pth{X}(a',b')$};
    \node (Fx) at (1,1.5) {$\scriptstyle \pth{X}(a,b)$};
    \node (Gy') at (3,0) {$\scriptstyle \{*\}$};
    \node (Gy) at (3,1.5) {$\scriptstyle \{*\}$};
\node (K) at (2,.75) {$\scriptscriptstyle \sim$};
    \draw[->] (Fx) -- (Fx');
    \draw[<-] (Fx) -- node [above] {$\scriptscriptstyle F_{a,b}$} (Gy);
    \draw[<-] (Fx') -- node [below] {$\scriptscriptstyle F_{a',b'}$} (Gy');
    \draw[->] (Gy) -- (Gy');
    \node (Fi) at (0.7,0.75) {$\scriptscriptstyle \pth{X}(\alpha,\beta)$};
    \node (Gj) at (3.3,0.75) {$\scriptscriptstyle Id$};
  \end{tikzpicture}
\end{minipage}
\begin{minipage}{.3\linewidth}
  \hfill
  \begin{tikzpicture}[scale=1]
    \node (Fx') at (1,0) {$\scriptstyle \pth{X}(u,v)$};
    \node (Fx) at (1,1.5) {$\scriptstyle \pth{X}(u,v)$};
    \node (Gy') at (3,0) {$\scriptstyle \{*\}$};
    \node (Gy) at (3,1.5) {$\scriptstyle \{*\}$};
\node (H) at (2,.75) {$\scriptscriptstyle \sim$};
    \draw[->] (Fx) -- (Fx');
    \draw[<-] (Fx) -- node [above] {$\scriptscriptstyle F_{u,v}$} (Gy);
    \draw[<-] (Fx') -- node [below] {$\scriptscriptstyle F_{u,v}$} (Gy');
    \draw[->] (Gy) -- (Gy');
    \node (Fi) at (0.7,0.75) {$\scriptscriptstyle Id$};
    \node (Gj) at (3.3,0.75) {$\scriptscriptstyle Id$};
  \end{tikzpicture}
\end{minipage}
\end{center}
}

\comment{\begin{center}
\begin{minipage}{.3\linewidth}
  \hfill
  \begin{tikzpicture}[scale=1]
    \node (Fx') at (1,0) {$\scriptstyle \pth{X}(a',b')$};
    \node (Fx) at (1,1.5) {$\scriptstyle \pth{X}(a,b)$};
    \node (Gy') at (3,0) {$\scriptstyle \{*\}$};
    \node (Gy) at (3,1.5) {$\scriptstyle \{*\}$};
\node (H) at (2,.75) {${\cal H}_{\alpha,\beta,\gamma,\delta}$};
    \draw[->] (Fx) -- (Fx');
    \draw[->] (Fx) -- node [above] {$\scriptscriptstyle F_{a,b}$} (Gy);
    \draw[->] (Fx') -- node [below] {$\scriptscriptstyle F_{a',b'}$} (Gy');
    \draw[->] (Gy) -- (Gy');
    \node (Fi) at (0.7,0.75) {$\scriptstyle \pth{X}(\alpha,\beta)$};
    \node (Gj) at (3.3,0.75) {$\scriptstyle Id$};
  \end{tikzpicture}
\end{minipage}
\begin{minipage}{.3\linewidth}
  \hfill
  \begin{tikzpicture}[scale=1]
    \node (Fx') at (1,0) {$\scriptstyle \pth{X}(a',b')$};
    \node (Fx) at (1,1.5) {$\scriptstyle \pth{X}(a,b)$};
    \node (Gy') at (3,0) {$\scriptstyle \{*\}$};
    \node (Gy) at (3,1.5) {$\scriptstyle \{*\}$};
\node (K) at (2,.75) {${\cal K}_{\alpha,\beta,\gamma,\delta}$};
    \draw[->] (Fx) -- (Fx');
    \draw[<-] (Fx) -- node [above] {$\scriptscriptstyle G_{a,b}$} (Gy);
    \draw[<-] (Fx') -- node [below] {$\scriptscriptstyle G_{a',b'}$} (Gy');
    \draw[->] (Gy) -- (Gy');
    \node (Fi) at (0.7,0.75) {$\scriptstyle \pth{X}(\alpha,\beta)$};
    \node (Gj) at (3.3,0.75) {$\scriptstyle Id$};
  \end{tikzpicture}
\end{minipage}
\end{center}
}


\comment{We also have the other two diagrams below~: 

\begin{center}
\begin{minipage}{.3\linewidth}
  \hfill
  \begin{tikzpicture}[scale=1]
    \node (Fx') at (1,0) {$\scriptstyle \pth{X}(a_0,a_0)\sim \{*\}$};
    \node (Fx) at (1,1.5) {$\scriptstyle \pth{X}(a_0,a_0)\sim \{*\}$};
    \node (Gy') at (3,0) {$\scriptstyle \{*\}$};
    \node (Gy) at (3,1.5) {$\scriptstyle \{*\}$};
\node (H) at (2,.75) {${\cal H}_{\alpha,\beta,\gamma,\delta}'$};
    \draw[->] (Fx) -- (Fx');
    \draw[->] (Fx) -- node [above] {$\scriptscriptstyle F_{c,d}'$} (Gy);
    \draw[->] (Fx') -- node [below] {$\scriptscriptstyle F_{c',d'}'$} (Gy');
    \draw[->] (Gy) -- (Gy');
    \node (Fi) at (0.7,0.75) {$\scriptstyle Id$};
    \node (Gj) at (3.3,0.75) {$\scriptstyle Id$};
  \end{tikzpicture}
\end{minipage}
\begin{minipage}{.3\linewidth}
  \hfill
  \begin{tikzpicture}[scale=1]
    \node (Fx') at (1,0) {$\scriptstyle \pth{X}(a_0,a_0)\sim \{*\}$};
    \node (Fx) at (1,1.5) {$\scriptstyle \pth{X}(a_0,a_0)\sim \{*\}$};
    \node (Gy') at (3,0) {$\scriptstyle \{*\}$};
    \node (Gy) at (3,1.5) {$\scriptstyle \{*\}$};
\node (K) at (2,.75) {${\cal K}_{\alpha,\beta,\gamma,\delta}'$};
    \draw[->] (Fx) -- (Fx');
    \draw[<-] (Fx) -- node [above] {$\scriptscriptstyle G_{c,d}'$} (Gy);
    \draw[<-] (Fx') -- node [below] {$\scriptscriptstyle G_{c',d'}'$} (Gy');
    \draw[->] (Gy) -- (Gy');
    \node (Fi) at (0.7,0.75) {$\scriptstyle Id$};
    \node (Gj) at (3.3,0.75) {$\scriptstyle Id$};
  \end{tikzpicture}
\end{minipage}
\end{center}
\noindent which does not bring any information, whereas the first diagrams
}

\subsection{Deformation retract and dicontractibility}

A directed deformation retract is a retract $(i,r)$ with $i \ : \ A \hookrightarrow X$ ($A$ is
a subspace of $X$) and $r \ : \ X \rightarrow A$ with $r \circ i = Id$ such that $(i,r)$ is
a directed homotopy equivalence. This boils down to asking~: 

\begin{definition}
Let $X$ be a d-space and $A$ be a subspace of $X$, with inclusion map $i \ : \ A \hookrightarrow X$. 
$A$ is a directed deformation retract of $X$ if~: 
\begin{itemize}
\item There is a continuous retraction $r \ : \ X \rightarrow A$ (therefore, with $r \circ i = Id$)
\item There is a continuous map $R \ : \ PA \rightarrow PX$, 
continuously bigraded, 
such that $(Pr_{c,d},R_{c,d})$ are homotopy equivalences between
$PX(c,d)$ and $PA(r(c),r(d))$) 
\item These homotopy equivalences are natural in the following sense. 
\begin{itemize}
\item 
For each $(\gamma,\delta)$ morphism from $(c,d)$ to $(c',d')$ in $\pth{X}$, 
the following diagrams commute up to (classical) homotopy~: 

\begin{center}
  \begin{tikzpicture}[scale=1]
    \node (Fx') at (1,0) {$\scriptstyle \pth{A}(r(c'),r(d'))$};
    \node (Fx) at (1,1.5) {$\scriptstyle \pth{A}(r(c),r(d))$};
    \node (Gy') at (3,0) {$\scriptstyle \pth{X}(c',d')$};
    \node (Gy) at (3,1.5) {$\scriptstyle \pth{X}(c,d)$};
\node (H) at (2,.75) {$\scriptscriptstyle \sim$};
    \draw[->] (Fx) -- (Fx');
    \draw[->] (Fx) -- node [above] {$\scriptscriptstyle R_{c,d}$} (Gy);
    \draw[->] (Fx') -- node [below] {$\scriptscriptstyle R_{c',d'}'$} (Gy');
    \draw[->] (Gy) -- (Gy');
    \node (Fi) at (0.7,0.75) {$\scriptscriptstyle PA(r(\gamma),r(\delta))$};
    \node (Gj) at (3.3,0.75) {$\scriptscriptstyle PX(\gamma,\delta)$};
  \end{tikzpicture}
\end{center}
\item 
For all $(\alpha,\beta)$ morphism from $(u,v) \in PA$ to $(u',v')\in PA$, 
there exists 
$(c,d)$ and $(c',d')$ 
with $r(c')=u'$, $r(c)=u$, $r(d)=v$ and $r(d')=v'$, and such 
that the following diagram commutes up to homotopy~: 

\begin{center}
\begin{minipage}{.35\linewidth}
  \hfill
\begin{tikzpicture}[scale=1]
    \node (Fx') at (1,0) {$\scriptstyle \pth{A}(u',v')$};
    \node (Fx) at (1,1.5) {$\scriptstyle \pth{A}(u,v)$};
    \node (Gy') at (3,0) {$\scriptstyle \pth{X}(c',d')$};
    \node (Gy) at (3,1.5) {$\scriptstyle \pth{X}(c,d)$};
\node (H) at (2,.75) {$\scriptscriptstyle \sim$};
    \draw[->] (Fx) -- (Fx');
    \draw[<-] (Fx) -- node [above] {$\scriptscriptstyle Pr_{c,d}$} (Gy);
    \draw[<-] (Fx') -- node [below] {$\scriptscriptstyle Pr_{c',d'}'$} (Gy');
    \draw[->] (Gy) -- (Gy');
    \node (Fi) at (0.7,0.75) {$\scriptscriptstyle PA(\alpha,\beta)$};
    \node (Gj) at (3.3,0.75) {$\scriptscriptstyle PX(R_{c',c}(\alpha),R_{d,d'}(\beta))$};
  \end{tikzpicture}
\end{minipage}
\begin{minipage}{.3\linewidth}
  \hfill
  \begin{tikzpicture}[scale=1]
    \node (Fx') at (1,0) {$\scriptstyle \pth{A}(u',v')$};
    \node (Fx) at (1,1.5) {$\scriptstyle \pth{A}(u,v)$};
    \node (Gy') at (3,0) {$\scriptstyle \pth{X}(c',d')$};
    \node (Gy) at (3,1.5) {$\scriptstyle \pth{X}(c,d)$};
\node (H) at (2,.75) {$\scriptscriptstyle \sim$};
    \draw[->] (Fx) -- (Fx');
    \draw[->] (Fx) -- node [above] {$\scriptscriptstyle R_{c,d}$} (Gy);
    \draw[->] (Fx') -- node [below] {$\scriptscriptstyle R_{c',d'}'$} (Gy');
    \draw[->] (Gy) -- (Gy');
    \node (Fi) at (0.7,0.75) {$\scriptscriptstyle PA(\alpha,\beta)$}; 
    \node (Gj) at (3.3,0.75) {$\scriptscriptstyle PX(R_{c',c}(\alpha),R_{d,d'}(\beta))$};
  \end{tikzpicture}
\end{minipage}
\end{center}
\end{itemize}
\end{itemize}
\end{definition}

Note that in the definition above, we always have the following diagrams that commutes on the nose~:   

\begin{center}
\begin{tikzpicture}[scale=1]
    \node (Fx') at (1,0) {$\scriptstyle \pth{A}(r(c'),r(d'))$};
    \node (Fx) at (1,1.5) {$\scriptstyle \pth{A}(r(c),r(d))$};
    \node (Gy') at (3,0) {$\scriptstyle \pth{X}(c',d')$};
    \node (Gy) at (3,1.5) {$\scriptstyle \pth{X}(c,d)$};
    \draw[->] (Fx) -- (Fx');
    \draw[<-] (Fx) -- node [above] {$\scriptscriptstyle Pr_{c,d}$} (Gy);
    \draw[<-] (Fx') -- node [below] {$\scriptscriptstyle Pr_{c',d'}'$} (Gy');
    \draw[->] (Gy) -- (Gy');
    \node (Fi) at (0.7,0.75) {$\scriptscriptstyle PA(r(\gamma),r(\delta))$};
    \node (Gj) at (3.3,0.75) {$\scriptscriptstyle PX(\gamma,\delta)$};
  \end{tikzpicture}
\end{center}

\comment{\begin{itemize}
\item For
all morphisms $(\alpha,\beta)$ from $(a,b)$ to $(a',b')$ in $\pth{X}$ the following
diagrams commute up to homotopy~:   

\begin{center}
\begin{minipage}{.3\linewidth}
  \hfill
  \begin{tikzpicture}[scale=1]
    \node (Fx') at (1,0) {$\scriptstyle \pth{X}(a',b')$};
    \node (Fx) at (1,1.5) {$\scriptstyle \pth{X}(a,b)$};
    \node (Gy') at (3,0) {$\scriptstyle \pth{Y}(f(a'),f(b'))$};
    \node (Gy) at (3,1.5) {$\scriptstyle \pth{Y}(f(a),f(b))$};
\node (K) at (2,.75) {$\scriptscriptstyle \sim$};
    \draw[->] (Fx) -- (Fx');
    \draw[<-] (Fx) -- node [above] {$\scriptscriptstyle F_{a,b}$} (Gy);
    \draw[<-] (Fx') -- node [below] {$\scriptscriptstyle F_{a',b'}$} (Gy');
    \draw[->] (Gy) -- (Gy');
    \node (Fi) at (0.7,0.75) {$\scriptscriptstyle \pth{X}(\alpha,\beta)$};
    \node (Gj) at (3.3,0.75) {$\scriptscriptstyle \pth{Y}(f(\alpha),f(\beta))$};
  \end{tikzpicture}
\end{minipage}
\end{center}
\item 
respectively,  
for each $(\gamma,\delta)$ morphism from $(c,d)$ to $(c',d')$ in $\pth{Y}$, 
the following diagrams commute up to (classical) homotopy~: 

\begin{center}
  \begin{tikzpicture}[scale=1]
    \node (Fx') at (1,0) {$\scriptstyle \pth{X}(g(c'),g(d'))$};
    \node (Fx) at (1,1.5) {$\scriptstyle \pth{X}(g(c),g(d))$};
    \node (Gy') at (3,0) {$\scriptstyle \pth{Y}(c',d')$};
    \node (Gy) at (3,1.5) {$\scriptstyle \pth{Y}(c,d)$};
\node (H) at (2,.75) {$\scriptscriptstyle \sim$};
    \draw[->] (Fx) -- (Fx');
    \draw[->] (Fx) -- node [above] {$\scriptscriptstyle G_{c,d}$} (Gy);
    \draw[->] (Fx') -- node [below] {$\scriptscriptstyle G_{c',d'}'$} (Gy');
    \draw[->] (Gy) -- (Gy');
    \node (Fi) at (0.7,0.75) {$\scriptscriptstyle PX(g(\gamma),g(\delta))$};
    \node (Gj) at (3.3,0.75) {$\scriptscriptstyle PY(\gamma,\delta)$};
  \end{tikzpicture}
\end{center}
\item We also ask that for all $(\gamma,\delta)$ morphism from $(u,v) \in PY^f$ to $(u',v') \in PY^f$, 
there exists 
$(a,b)$ and $(a',b')$ with
$f(a)=u$, $f(a')=u'$, $f(b)=v$ and $f(b')=v'$, and such that
the following diagram commutes up to homotopy~: 

\begin{center}
\begin{minipage}{.35\linewidth}
  \hfill
  \begin{tikzpicture}[scale=1]
    \node (Fx') at (1,0) {$\scriptstyle \pth{X}(a',b')$};
    \node (Fx) at (1,1.5) {$\scriptstyle \pth{X}(a,b)$};
    \node (Gy') at (3,0) {$\scriptstyle \pth{Y}^f(u',v')$};
    \node (Gy) at (3,1.5) {$\scriptstyle \pth{Y}^f(u,v)$};
\node (H) at (2,.75) {$\scriptscriptstyle \sim$};
    \draw[->] (Fx) -- (Fx');
    \draw[->] (Fx) -- node [above] {$\scriptscriptstyle Pf_{a,b}$} (Gy);
    \draw[->] (Fx') -- node [below] {$\scriptscriptstyle Pf_{a',b'}$} (Gy');
    \draw[->] (Gy) -- (Gy');
    \node (Fi) at (0.7,0.75) {$\scriptscriptstyle \pth{X}(F_{a',a}(\gamma),F_{b,b'}(\delta))$};
    \node (Gj) at (3.3,0.75) {$\scriptscriptstyle \pth{Y}(\gamma,\delta)$};
  \end{tikzpicture}
\end{minipage}
\begin{minipage}{.3\linewidth}
  \hfill
  \begin{tikzpicture}[scale=1]
    \node (Fx') at (1,0) {$\scriptstyle \pth{X}(a',b')$};
    \node (Fx) at (1,1.5) {$\scriptstyle \pth{X}(a,b)$};
    \node (Gy') at (3,0) {$\scriptstyle \pth{Y}^f(u',v')$};
    \node (Gy) at (3,1.5) {$\scriptstyle \pth{Y}^f(u,v)$};
\node (H) at (2,.75) {$\scriptscriptstyle \sim$};
    \draw[->] (Fx) -- (Fx');
    \draw[<-] (Fx) -- node [above] {$\scriptscriptstyle F_{a,b}$} (Gy);
    \draw[<-] (Fx') -- node [below] {$\scriptscriptstyle F_{a',b'}$} (Gy');
    \draw[->] (Gy) -- (Gy');
    \node (Fi) at (0.7,0.75) {$\scriptscriptstyle \pth{X}(F_{a',a}(\gamma),F_{b,b'}(\delta))$};
    \node (Gj) at (3.3,0.75) {$\scriptscriptstyle \pth{Y}(\gamma,\delta)$};
  \end{tikzpicture}
\end{minipage}
\end{center}

\item 
respectively, for all $(\alpha,\beta)$ morphism from $(u,v) \in PX^g$ to $(u',v')\in PX^g$, 
there exists 
$(c,d)$ and $(c',d')$ 
with $g(c')=u'$, $g(c)=u$, $g(d)=v$ and $g(d')=v'$, and such 
that the following diagram commutes up to homotopy ~: 

\begin{center}
\begin{minipage}{.35\linewidth}
  \hfill
\begin{tikzpicture}[scale=1]
    \node (Fx') at (1,0) {$\scriptstyle \pth{X}^g(u',v')$};
    \node (Fx) at (1,1.5) {$\scriptstyle \pth{X}^g(u,v)$};
    \node (Gy') at (3,0) {$\scriptstyle \pth{Y}(c',d')$};
    \node (Gy) at (3,1.5) {$\scriptstyle \pth{Y}(c,d)$};
\node (H) at (2,.75) {$\scriptscriptstyle \sim$};
    \draw[->] (Fx) -- (Fx');
    \draw[<-] (Fx) -- node [above] {$\scriptscriptstyle Pg_{c,d}$} (Gy);
    \draw[<-] (Fx') -- node [below] {$\scriptscriptstyle Pg_{c',d'}'$} (Gy');
    \draw[->] (Gy) -- (Gy');
    \node (Fi) at (0.7,0.75) {$\scriptscriptstyle PX(\alpha,\beta)$};
    \node (Gj) at (3.3,0.75) {$\scriptscriptstyle PY(G_{c',c}(\alpha),G_{d,d'}(\beta))$};
  \end{tikzpicture}
\end{minipage}
\begin{minipage}{.3\linewidth}
  \hfill
  \begin{tikzpicture}[scale=1]
    \node (Fx') at (1,0) {$\scriptstyle \pth{X}^g(u',v')$};
    \node (Fx) at (1,1.5) {$\scriptstyle \pth{X}^g(u,v)$};
    \node (Gy') at (3,0) {$\scriptstyle \pth{Y}(c',d')$};
    \node (Gy) at (3,1.5) {$\scriptstyle \pth{Y}(c,d)$};
\node (H) at (2,.75) {$\scriptscriptstyle \sim$};
    \draw[->] (Fx) -- (Fx');
    \draw[->] (Fx) -- node [above] {$\scriptscriptstyle G_{c,d}$} (Gy);
    \draw[->] (Fx') -- node [below] {$\scriptscriptstyle G_{c',d'}'$} (Gy');
    \draw[->] (Gy) -- (Gy');
    \node (Fi) at (0.7,0.75) {$\scriptscriptstyle PX(\alpha,\beta)$}; 
    \node (Gj) at (3.3,0.75) {$\scriptscriptstyle PY(G_{c',c}(\alpha),G_{d,d'}(\beta))$};
  \end{tikzpicture}
\end{minipage}
\end{center}
\end{itemize}
}

\section{Application to control}

\subsection{Differential inclusions and d-spaces}

Consider, for $x \in X$ ($X$ is a differential manifold, that we suppose in general
to be a submanifold of some $\R^n$)~: 
\begin{equation}
\frac{dx}{dt} \in F(t,x(t))
\label{diffeq}
\end{equation}
\noindent with $F$ upper hemicontinuous function of $x$, measurable in $t$, and
$F(t,x)$ is a closed convex set for all $t$ and $x$. This ensures the existence
of solutions to initial value problems (for small interval of times). 

Denote by $S(F,X)$ the set of pairs $(p,t)$ ($t \geq 0$)
of (absolutely continuous) solutions of Equation (\ref{diffeq}) over
compact and connected intervals of time (hence of the form $[0,t]$), together
with the maximal time on which it is defined, $t$. Define the path 
fibration map $s : S(F,X)\rightarrow X\times X$ with $s(p,t)=(p(0),p(t))$. 
Then there is a (Moore-like) concatenation operation *, for $(p,u)$ and $(q,v)$ in 
$S(F,X)$, such that $s(p,u)_1=s(q,v)_0$, defined as $r=p*q$ with 
$r(t)=p(t)$ for $t \in [0,u]$, and $r(t)=q(t-u)$ for $t\in [u,u+v]$. 
We check that $(r,u+v) \in S(F,X)$. 

For $(p,u) \in S(F,X)$, we write $\tilde{p}$ for the map from $I$ to $X$ defined by
$\tilde{p}(t)=p(ut)$. 
Define now $dX$ to be the set $\{\tilde{p}\circ \phi \ | \ (p,u)\in S(F,X) \mbox{ ,
$\phi: I \rightarrow I$ non-decreasing}\}$. 

\begin{lemma}
\label{lemma:dspace}
$(X,dX)$ as defined above is a d-space.
\end{lemma}

Relationship with the discrete version of \cite{mrozek2015conley}?

Note that when $F$ is single-valued and that Equation (\ref{diffeq}) is an ordinary differential equation, 
$TX(\alpha,\beta)$ (for any two points $\alpha$ and $\beta$ of $X$) is either empty or a singleton. And the components
of $(X,dX)$ are in bijection with the orbits of the differential system. We call these directed spaces ``totally disconnected
directed spaces'', as there is no way to go from one component to another. 

Analogous, in directed topology, of Wa\.zewski principle as in \cite{switched}? 

First steps below (maybe) : 

By Michael's selection principle, $F$ admits continuous selections, i.e. continuous maps 
$f : X \rightarrow \R^n$ such that for all $x \in X$, $f(x) \in F(x)$. Under some extra(? to be checked - do
we need something about Lipschitzianity here?) conditions, the initial value problem for the ordinary differential
equation 
\begin{equation}
\frac{dx}{dt} = f(x)
\label{ODE}
\end{equation}
has a unique solution on a small interval time interval.

Consider the d-space $(X,dX)$ constructed from the differential inclusion of Equation (\ref{diffeq}), by Lemma
\ref{lemma:dspace}. I claim that there exists a ``natural system of homotopy equivalences'' between 
$(X,dX)$ and a totally disconnected directed space given by any continuous selection for $F$. 

[This is more the ``control point of view'' of the same problem as before, which was looked upon with a ``motion planning''
view]

This means that, up to directed homotopy equivalence, the qualitative view we have on such differential inclusions is
the same as for any of the ordinary differential equation it ``contains''. 

The proof goes as follows. Consider a continuous selection $f$ of $F$ and $a$ and $b$ in $X$ such that
$a$ and $b$ are on the same orbit ($b$ is reachable after a positive time, from $a$) for the ODE of Equation
(\ref{ODE}) ; hence, writing $\phi$ for the (continuous) flow of this ODE, there is some $u \geq 0$ such that $b=\phi(a,u)$. 
Consider the retraction of topological spaces, given by~: 
$i: TX(a,b) \rightarrow TX(a,b)$ with $i([\phi(a,.)])=[\phi(a,.)]$ (with the suitable reparameterization etc.) and
$r: TX(a,b) \rightarrow TX(a,b)$ with $r(p)=[\phi(a,.)]$. 
Consider now $H : TX(a,b) \times [0,1] \rightarrow TX(a,b)$ with $\phi(a,.)_{\leq t} * ...$

[NO, but I think I sort of see what to do instead.]


What would be the directed complexity of differential inclusions, and their relations
to Pontryagin vs. Hamilton-Jacobi formulations of control problems (typically, $F$ is generated
by a control problem with unknown parameters)?

\subsection{PV differential inclusions and po-spaces}

In general, a d-space does not come from a differential inclusion as defined above
(in particular, the ``future cone'' might now be convex). 

There as simple cases where this is the true, though. Consider first (compact) subspaces 
$X=I^n\backslash F \subseteq \R^n$, where $F=\{\int{F}_1,\ldots,\int{F}_k\}$, and the $F_i$ are 
isothetic (closed) hyperrectangles $F_i=[a^1_i,b^1_i]\times [a^n_i,b^n_i]$ ($0\leq 
a^j_i\leq b^j_i\leq 1$). They are compact po-spaces with the componentwise ordering $\leq$ 
inherited from $\R^n$ hence they are d-spaces with, as directed paths, non-decreasing
and continuous maps from $I$ (ordered by the classical ordering or real numbers)
to $(X,\leq)$.

For $x \in X$, define $F(x)=\{...\}$. (to be continued)

[not that easy ; be careful of critical points, for instance, at lower points of the $F_i$, I guess we can
only have $F(x)=\{0\}$??]

(then, local po-spaces? Finsler structure in the geometric realization of some
pre-cubical sets?)


Look at the case of geometric precubical sets \cite{cat0} page 11 and their 
(directed) geometric realization as complete geodesic spaces 
section 3.7.4 page 50.

\comment{
\begin{definition}
Let $X$ and $Y$ be two d-spaces.  
A strong dihomotopy equivalence between $X$ and $Y$ is a set 
$R$ of tuples $(x,y,x',y',f_{x,y},g_{x',y'})$ with $(x,y)$ an object of $\PP{X}$, $(x',y')$ an
object of $\PP_Y$ and $(f_{x,y},g_{x',y'})$ a strong homotopy equivalence between $TX(x,y)$ and 
$TY(x',y')$
such that: \vskip -.2cm
\begin{enumerate}
\item for every dipath $(x,y) \in \PP{X}$, $R$ contains some tuple of the
  form $(x,y,z,t,f_{x,y},g_{z,t})$, and similarly for every 
$(z,t)$ of $\PP_Y$.
\item for every tuple $(x,y,z,t,f_{x,y},g_{z,t}) \in R$ and every morphism
  $i: (x,y) \rightarrow (x',y')$ in $\PP{X}$, i.e. for every pair of dipaths
$\alpha$, $\beta$ in $X$ s.t. $\alpha : x'\rightarrow x$ and $\beta : y \rightarrow y'$ 
  there exists a tuple $(x',y',z',t',f_{x',y'},g_{z',t'}) \in R$ and
  a morphism $j : (z,t) \rightarrow (z',t')$ in $FTY$, i.e. there exists a pair of
dipaths $\gamma$, $\delta$ in $Y$ such that $\gamma : z' \rightarrow z$ and 
$\delta : t \rightarrow t'$ 
such that $f_{x',y'} \circ \pth{X}(i) \sim \pth{Y}(j)
  \circ f_{x,y}$, and symmetrically, 
$g_{z',t'} \circ \pth{Y}(j) \sim
\pth{X}(i)  \circ g_{z,t}$ where $\sim$ denote a homotopy, continuous in $x$, $y$, $x'$ and $y'$. 
\end{enumerate}


\begin{center}
\begin{minipage}{.3\linewidth}
  \hfill
  \begin{tikzpicture}[scale=1]
    \node (Fx') at (1,0) {$T{X}(x',y')$};
    \node (Fx) at (1,1.5) {$T{X}(x;t)$};
    \node (Gy') at (3,0) {$T{Y}(z',t')$};
    \node (Gy) at (3,1.5) {$T{Y}(z,t)$};
    \draw[->] (Fx) -- (Fx');
    \draw[->] (Fx) -- (Gy);
    \draw[->] (Fx') -- (Gy');
    \draw[->] (Gy) -- (Gy');
    \node (Fi) at (0.7,0.75) {$\pth{X}(\alpha,\beta)$};
    \node (Gj) at (3.3,0.75) {$\pth{Y}(\gamma,\delta)$};
    \node (eta) at (2,1.7) {$f_{x,y}$};
    \node (eta') at (2,-0.3) {$f_{x',y'}$};
  \end{tikzpicture}
\end{minipage}
\begin{minipage}{.3\linewidth}
  \hfill
  \begin{tikzpicture}[scale=1]
    \node (Fx') at (1,0) {$T{X}(x',y')$};
    \node (Fx) at (1,1.5) {$T{X}(x,y)$};
    \node (Gy') at (3,0) {$T{Y}(z',t')$};
    \node (Gy) at (3,1.5) {$T{Y}(z,t)$};
    \draw[->] (Fx) -- (Fx');
    \draw[<-] (Fx) -- (Gy);
    \draw[<-] (Fx') -- (Gy');
    \draw[->] (Gy) -- (Gy');
    \node (Fi) at (0.7,0.75) {$\pth{X}(\alpha,\beta)$};
    \node (Gj) at (3.3,0.75) {$\pth{Y}(\gamma,\delta)$};
    \node (eta) at (2,1.7) {$g_{z,t}$};
    \node (eta') at (2,-0.3) {$g_{z',t'}$};
  \end{tikzpicture}
\end{minipage}
\end{center}
This means that we have a strong homotopy
equivalences between $TX(x,y)$ and $TY(z,t)$ that depend coherently on the 
dipath extensions, that correspond to each other in $X$ and $Y$ through the (bisimulation)
relation $R$.
\end{definition}
}

\comment{
\begin{definition}
Let $X$ and $Y$ be two d-spaces.  
An strong dihomotopy equivalence between $X$ and $Y$ is a set 
$R$ of quadruples $(u,f_{u,v},g_{u,v},v)$ with $u$ an object of $FTX$, $v$ an
object of $FTY$ and $(f_{u,v},g_{u,v})$ a strong homotopy equivalence between $TX(u_0,u_1)$ and 
$TY(v_0,v_1)$
such that: \vskip -.2cm
\begin{enumerate}
\item for every dipath $u: u_0 \rightarrow u_1$ of $X$, $R$ contains some quadruple of the
  form $(u,f_{u,v},g_{u,v}, v)$, and similarly for every dipath $v: v_0\rightarrow v_1$ of $Y$; [be careful, 
maps in $Top_*$]
\item for every quadruple $(u,f_{u,v},g_{u,v},v) \in R$ and every morphism
  $i: u \rightarrow u'$ in $FTX$, i.e. for every pair of dipaths
$\alpha$, $\beta$ in $X$ s.t. $u'=\alpha*u*\beta$ , 
  there exists a quadruple $(u',f'_{u',v'},g'_{u',v'},v') \in R$ and
  a morphism $j : v \rightarrow v'$ in $FTY$, i.e. there exists a pair of
dipaths $\gamma$, $\delta$ in $Y$ such that $v'=\gamma * v * \delta$, 
such that $f'_{u',v'} \circ \tilde{X}(i) = \tilde{Y}(j)
  \circ f_{u,v}$, 
$g'_{u',v'} \circ \tilde{Y}(j)=
\tilde{X}(i)  \circ g_{u,v}$, and symmetrically, 
\end{enumerate}


\begin{center}
\begin{minipage}{.3\linewidth}
  \hfill
  \begin{tikzpicture}[scale=1]
    \node (Fx') at (1,0) {$\tilde{X}(u')$};
    \node (Fx) at (1,1.5) {$\tilde{X}(u)$};
    \node (Gy') at (3,0) {$\tilde{Y}(v')$};
    \node (Gy) at (3,1.5) {$\tilde{Y}(v)$};
    \draw[->] (Fx) -- (Fx');
    \draw[->] (Fx) -- (Gy);
    \draw[->] (Fx') -- (Gy');
    \draw[->] (Gy) -- (Gy');
    \node (Fi) at (0.7,0.75) {$\tilde{X}(\alpha,\beta)$};
    \node (Gj) at (3.3,0.75) {$\tilde{Y}(\gamma,\delta)$};
    \node (eta) at (2,1.7) {$f_{u,v}$};
    \node (eta') at (2,-0.3) {$f_{u',v'}$};
  \end{tikzpicture}
\end{minipage}
\begin{minipage}{.3\linewidth}
  \hfill
  \begin{tikzpicture}[scale=1]
    \node (Fx') at (1,0) {$\tilde{X}(u')$};
    \node (Fx) at (1,1.5) {$\tilde{X}(u)$};
    \node (Gy') at (3,0) {$\tilde{Y}(v')$};
    \node (Gy) at (3,1.5) {$\tilde{Y}(v)$};
    \draw[->] (Fx) -- (Fx');
    \draw[<-] (Fx) -- (Gy);
    \draw[<-] (Fx') -- (Gy');
    \draw[->] (Gy) -- (Gy');
    \node (Fi) at (0.7,0.75) {$\tilde{X}(\alpha,\beta)$};
    \node (Gj) at (3.3,0.75) {$\tilde{Y}(\gamma,\delta)$};
    \node (eta) at (2,1.7) {$g_{u,v}$};
    \node (eta') at (2,-0.3) {$g_{u',v'}$};
  \end{tikzpicture}
\end{minipage}
\end{center}
This means that we have a strong homotopy
equivalences between $TX(u_0,u_1)$ and $TY(v_0,v_1)$ that depend coherently on the 
dipath extensions, that correspond to each other in $X$ and $Y$ through the (bisimulation)
relation $R$.
\end{definition}
}

\comment{Hence the natural homotopy of $X$ 
(resp. homology) is only the trivial group (resp. $\Z$) in dimension 0, 
and the zero group (resp. 0) in all other dimensions, for all $a,b\in \Gamma$.
Up to bisimulation, this reduces to the natural system from the category of factorization
of $1$, the final object in the category of categories, to groups (resp. abelian groups),
with the same values as the above.
}

\subsection{Bisimilarity (recap from \cite{naturalhomology})}

Given two small categories $X$, $Y$ and two functors $\map{F}{X}{\Ab}$
and $\map{G}{Y}{\Ab}$, we call \emph{bisimulation} between $F$ and $G$
any set $R$ of triples $(x,\eta,y)$ with $x$ an object of $X$, $y$ an
object of $Y$ and $\eta$ an isomorphism of groups from $Fx$ to $Gy$
such that: \vskip -.2cm
\begin{enumerate}
\item for every object $x$ of $X$, $R$ contains some triple of the
  form $(x, \eta, y)$, and similarly for every object $y$ of $Y$;
\item for every triple $(x,\eta,y) \in R$ and every morphism
  $\map{i}{x}{x'}$ in $X$, there is 
  a triple $(x',\eta',y') \in R$ (hence $\eta'$ is an isomorphism) and
  a morphism $\map{j}{y}{y'}$ in $Y$ such that $\eta' \circ Fi = Gj
  \circ \eta$, and symmetrically, 
\end{enumerate}

\vskip-1em
\begin{minipage}{0.69\linewidth}
for every $(x,\eta,y) \in R$ and
  every morphism $\map{j}{y}{y'}$ of $Y$ there is a triple
  $(x',\eta',y') \in R$ and a morphism $\map{i}{x}{x'}$ such that
  $\eta' \circ Fi = Gj \circ \eta$.

\end{minipage}
\begin{minipage}{0.3\linewidth}
  \hfill
  \begin{tikzpicture}[scale=.7]
    \node (x') at (0,0) {$x'$};
    \node (x) at (0,1.5) {$x$};
    \node (Fx') at (1,0) {$Fx'$};
    \node (Fx) at (1,1.5) {$Fx$};
    \node (Gy') at (3,0) {$Gy'$};
    \node (Gy) at (3,1.5) {$Gy$};
    \node (y') at (4,0) {$y'$};
    \node (y) at (4,1.5) {$y$};
    \draw[->] (x) -- (x');
    \draw[->] (y) -- (y');
    \draw[->] (Fx) -- (Fx');
    \draw[->] (Fx) -- (Gy);
    \draw[->] (Fx') -- (Gy');
    \draw[->] (Gy) -- (Gy');
    \node (i) at (-0.2,0.75) {$i$};
    \node (j) at (4.2,0.75) {$j$};
    \node (Fi) at (0.7,0.75) {$Fi$};
    \node (Gj) at (3.3,0.75) {$Gj$};
    \node (eta) at (2,1.7) {$\eta$};
    \node (eta') at (2,-0.3) {$\eta'$};
  \end{tikzpicture}
\end{minipage}

\noindent
We say that $F$ and $G$ are \emph{bisimilar} if and only if there is a
bisimulation $R$ between them.  This is an equivalence relation.

A practical way of showing that two functors are bisimilar is by
exhibiting an \emph{open map} from one to the other.  This arises from
the theory of Joyal \emph{et al.} \cite{joyal}.
The open maps from a functor $\map F E {\Ab}$ to a functor $\map G X
{\Ab}$ are the pairs $(\Phi, \sigma)$ where $\Phi$ is a fibration from
$E$ to $X$, and $\sigma$ is a natural isomorphism from $F$ to $G \circ
\Phi$.  We say that $\map {\Phi} E X$ is a \emph{fibration} if and
only if: (1) $\Phi$ is surjective on objects, i.e., for every object
$x$ of $X$ there is an object $e$ of $E$ such that $\Phi(e) = x$, and
(2) for every object $e$ of $E$, every morphism $\map{f}{\Phi
  (e)}{x'}$ in $X$ lifts to a morphism $\map{h}{e}{e'}$ in $E$ such
that $\Phi(h) = f$ (in particular, $\Phi(e') = x'$).

\iflong%
We recall the following proposition from \cite{naturalhomology}~: 
\fi%
\begin{proposition}
  \label{prop:bisim}
  Two functors $\map F X {\Ab}$ and $\map G Y {\Ab}$ are bisimilar if
  and only if
  they are related by a span of open maps.
\end{proposition}

\ForAuthors{Prove that bisimilar implies strongly dihomotopy equivalent. The only problem
is to find $f$, $g$ - should be given by end points... Maybe using the span presentation.}

\subsection{Relation with \cite{CSL16}}

Imitating \cite{goubault07}, we call a {Yoneda system of dipaths of $X$} any subset $\Lambda$ of $PX$ such that:
\begin{itemize}
	\item[--] $\Lambda$ is closed under concatenation and dihomotopy;
	\item[--] for every $\map{\gamma}{x}{y} \in \Lambda$, for every $z \in X$ such that $\pth{X}(y,z) \neq \varnothing$, the function $\map{\gamma\star\_}{\pth{X}(y,z)}{\pth{X}(x,z)}$, $\delta \longmapsto \gamma\star\delta$ is a homotopy equivalence;
	\item[--] for every $\map{\gamma}{x}{y} \in \Lambda$, for every $w \in X$ such that $\pth{X}(w,x) \neq \varnothing$, the function $\map{\_\star\gamma}{\pth{X}(w,x)}{\pth{X}(w,y)}$, $\delta \longmapsto \delta\star\gamma$ is a homotopy equivalence;
	\item[--] $\Lambda$ has the right Ore condition modulo dihomotopy, i.e., for every $\map{f}{x}{y} \in \Lambda$ and every dipath $\map{g}{z}{y}$ in $X$ there are $\map{f'}{w}{z} \in \Lambda$ and a dipath $\map{g'}{w}{x}$ in $X$ for some $w$ such that $g'\star f$ and $f'\star g$ are dihomotopic;
	\begin{center}
	\begin{tikzpicture}[scale=1.5]
		\node (A) at (0,1) {$w$};
		\node (B) at (1.2,1) {$x$};
		\node (C) at (0,0) {$z$};
		\node (D) at (1.2,0) {$y$};
		\node (E) at (0.6,0.5) {\scriptsize{mod. dihomot.}};
		\path[->,font=\scriptsize,dotted]
		(A) edge node[above]{$g'$} (B)
		(A) edge node[left]{$f'\in\Lambda$} (C);
		\path[->,font=\scriptsize]
		(C) edge node[below]{$g$} (D)
		(B) edge node[right]{$f\in\Lambda$} (D);
	\end{tikzpicture}
	\end{center}
	\item[--] $\Lambda$ has the left Ore condition modulo dihomotopy, i.e., for every $\map{f}{x}{y} \in \Lambda$ and every dipath $\map{g}{x}{z}$ in $X$ there are $\map{f'}{z}{w} \in \Lambda$ and a dipath $\map{g'}{x}{w}$ in $X$ for some $w$ such that $g\star f'$ and $f\star g'$ are dihomotopic.
			\begin{center}
	\begin{tikzpicture}[scale=1.5]
		\node (A) at (0,1) {$z$};
		\node (B) at (1.2,1) {$y$};
		\node (C) at (0,0) {$x$};
		\node (D) at (1.2,0) {$w$};
		\node (E) at (0.6,0.5) {\scriptsize{mod. dihomot.}};
		\path[->,font=\scriptsize]
		(A) edge node[above]{$g$} (B)
		(A) edge node[left]{$f\in\Lambda$} (C);
		\path[->,font=\scriptsize,dotted]
		(C) edge node[below]{$g'$} (D)
		(B) edge node[right]{$f'\in\Lambda$} (D);
	\end{tikzpicture}
	\end{center}
\end{itemize}

\begin{lemma}
\label{lem:inedip}
The set of Yoneda systems of dipaths of $X$ is a complete lattice for inclusion. We note $\inn{X}$ the largest such system and call its elements inessential dipaths.
\end{lemma}

Let $X$ be a dspace and $A$ be a sub-dspace of $X$, i.e., a sub-topological space $A \subseteq X$ whose dipaths are the dipaths of $X$ with image in $A$. We say that $A$ is a {future deformation retract} of $X$ if there is a continuous function $\map{H}{X}{\inn{X}}$ ($\inn{X}$ is equipped with the subspace topology of $\textbf{Top}(I,X)$) such that:
\begin{itemize}
	\item[--] for every $x \in X$, $H(x)(0) = x$;
	\item[--] for every $a \in A$ and $t \in I$, $H(a)(t) = a$;
	\item[--] for every $x \in X$, $H(x)(1) \in A$;
	\item[--] for every $t \in I$, the map $\map{H_t}{X}{X}$, $x \longmapsto H(x)(t)$ is a dmap;
	\item[--] for every dipath $\delta$ of $A$ from $z$ to $H_1(x)$ there is a dipath $\gamma$ of $X$ from $y$ to $x$ with $H_1(y) = z$ and $H_1\circ\gamma$ and $\delta$ are dihomotopic.
\end{itemize}
We stress here the fact that $H$ must be with values in the inessential dipaths $\inn{X}$. Similarly, we define {past deformation retracts} by switching the role of $1$ and $0$ in the previous definition. We then say that two dspaces are {directed homotopy equivalent} if there is a zigzag of future and past deformation retracts between them.

\subsection{Remarks on dihomotopy equivalence}

Note now the following~: 
\begin{itemize}
\item [1)] Strong dihomotopy equivalence (trivially) implies classical homotopy equivalence 
\item [2)] We would like to have the following : the existence of a future or a past dihomotopy deformation retract implies
being strongly dihomotopy equivalent in the sense of the definition above.
\item [3)] The directed segment is strongly dihomotopy equivalent to the point, but the wedge of
two directed segments (union, where we identify their starting points) is not? (to be checked)
\ForAuthors{To be continued}
\end{itemize}

1) is trivial. Now let us have a closer look to 2). Suppose 
$A$ is a future deformation retract of $X$. So we have 
$\map{H}{X}{\inn{X}}$ ($\inn{X}$ is equipped with the subspace topology of $\textbf{Top}(I,X)$) such that:
\begin{itemize}
	\item[--] for every $x \in X$, $H(x)(0) = x$;
	\item[--] for every $a \in A$ and $t \in I$, $H(a)(t) = a$;
	\item[--] for every $x \in X$, $H(x)(1) \in A$;
	\item[--] for every $t \in I$, the map $\map{H_t}{X}{X}$, $x \longmapsto H(x)(t)$ is a dmap;
	\item[--] for every dipath $\delta$ of $A$ from $z$ to $H_1(x)$ there is a dipath $\gamma$ of $X$ from $y$ to $x$ with $H_1(y) = z$ and $H_1\circ\gamma$ and $\delta$ are dihomotopic.
\end{itemize}

Let $f \ : \ X \rightarrow A$ be the dmap $H_1$. Let $g \ : \ A \rightarrow
X$ be the inclusion (d-)map. We define~: 
\begin{itemize}
\item $F_{a,b}(p)=H_1 \circ p$ (which is a dipath from $f(a)$ to $f(b)$ since $H_1$
is a dmap, $p$ is a dipath, and $f=H_1$)
\item Let $\delta \in \pth{A}(f(a),f(b))$, by the last property of $H$ above, 
there is a dipath $\gamma$ of $X$ from $y$ to $b$ with $f(y)=f(a)$ such that
$H_1 \circ \gamma$ and $\delta$ are dihomotopic. If we could impose $y=a$ that we could
set $G_{a,b}(\delta)=\gamma$, and then $F_{a,b} \circ G_{a,b}(\delta)=F_{a,b}(\gamma)=
H_1 \circ \gamma$ is dihomotopic to $\delta$. The other problem is that we know
nothing of $G_{a,b} \circ F_{a,b}(p)$, especially whether it is dihomotopic to $p$.
\item For $c$ and $d$ in $A$, and $p \in \pth{X}(g(c),g(d))$, 
we set $F_{c,d}'(p)=H_1 \circ p \ : \ c \rightarrow d$ where $p \ : \ g(c) \rightarrow g(d)$
\item For $c$ and $d$ in $A$, and $q \in \pth{A}(c,d)$, we set
$G_{c,d}'(q)=g(q) \ : \ g(c) \rightarrow g(d)$. Now, $F_{c,d}' \circ G_{c,d}'(q)=H_1(g(q))=q$
and $G_{c,d}' \circ F_{c,d}'(p)=G_{c,d}'(H_1 \circ p)=H_1 \circ p$ is dihomotopic to $p$. 
\end{itemize}

\ForAuthors{No clear relationship yet. The problem is that our ``bisimulation'' is very
particular...}

Can we prove that $X$ has a final point $a_1$ ($a_1$ can be reached from any point $x$
in $X$), and the dipath fibration has
a continuous section, implies that $\{ a_1 \}$ is a future deformation retract in the
sense above? I do not think so. 

The natural way to go would be as follows. Let $s \ : 
\ \Gamma \rightarrow PX$ be the section in question. We define $H \ : \ X \rightarrow PX$
by $H(x)(t)=s(x,a_1)(t)$. Indeed we have $H(x)(0)=x$, $H(x)(1)=a_1$, $H(a_1)(t)=a_1$ for
all $t \in I$. Also $H$ will be in value in $\inn{X}$ since as we have a continuous
section to $\chi$, we known that all $\pth{X}(x,a_1)$ are homotopy-equivalent to a 
point. But $H_t: X \rightarrow X$ is defined by $H_t\circ p(u)=s(p(u),a_1)(t)$ for 
$u \in I$. But there is no reason that it defined a dipath ; it is a continuous path only
in general? (note that the last condition on $\delta$ and $\gamma$ is trivial)